\documentclass[11pt,a4paper]{article}
\usepackage[T1]{fontenc}
\usepackage[utf8]{inputenc}
\usepackage{authblk}
\usepackage{diagbox}
\usepackage{amsmath, appendix, ulem, amsthm}
\usepackage{amssymb, xcolor}
\usepackage[margin=1in]{geometry}
\usepackage{mathrsfs}
\usepackage{graphicx}
\usepackage{subfig}
\usepackage{float}
\usepackage{epsf}
\usepackage{titletoc}
\usepackage{cases}
\usepackage[linesnumbered,ruled]{algorithm2e}
\usepackage{colortbl}
\usepackage[numbers,sort]{natbib}

\numberwithin{equation}{section}

\newtheorem{lem}{Lemma}[section]
\newtheorem{thm}{Theorem}[section]
\newtheorem{prop}{Proposition}[section]
\newtheorem{cor}{Corollary}[section]
\theoremstyle{remark}
\newtheorem{rmk}{Remark}[section]
\newcommand\keywords[1]{\textbf{Keywords}: #1}

\title{Transition behavior of the
waiting time distribution in a jumping model with the internal state}

\author[a]{Zhe Xue}
\author[b,d]{Yuan Zhang\thanks{Corresponding author: zhang\_probab@ruc.edu.cn}}
\author[c]{Zhennan Zhou}
\author[a]{Min Tang\thanks{Corresponding author: tangmin@sjtu.edu.cn}}

\affil[a]{School of Mathematical Sciences and Institute of Natural Sciences, MOE-LSC, CMA-Shanghai, Shanghai Jiao Tong University, China}
\affil[b]{Center for Applied Statistics and School of Statistics, Renmin University of China, China}
\affil[c]{Beijing International Center for Mathematical Research, Peking University, China}
\affil[d]{Pazhou Laboratory, Guangzhou 510330, China}

\date{}

\begin{document}

\maketitle

\begin{abstract}     
It has been noticed that when the waiting time distribution exhibits a transition from an intermediate time power law decay to a long-time exponential decay in the continuous time random walk model, a transition from anomalous diffusion to normal diffusion can be observed at the population level. However, the mechanism behind the transition of waiting time distribution is rarely studied. In this paper, we provide one possible mechanism to explain the origin of such transition. A jump model terminated by a state-dependent Poisson clock is studied by a formal asymptotic analysis for the time evolutionary equation of its probability density function. The waiting time behavior under a more relaxed setting can be rigorously characterized by probability tools. Both approaches show the transition phenomenon of the waiting time $T$, which is further verified numerically by particle simulations. Our results indicate that a small drift and strong noise in the state equation and a stiff response in the Poisson rate are crucial to the transitional phenomena. 
\end{abstract}

\keywords{power-law decay; exponential decay; waiting time; transitional phenomena}

%----------------------------------------------------
\section{Introduction}
Diffusion processes are continuous-time, continuous-state processes whose sample paths are everywhere continuous but nowhere differentiable \cite{ibe2013markov,ikeda2014stochastic,pavliotis2014stochastic}. 
% In the real world, diffusion is everywhere. \textcolor{red}{For example, the Brownian motion, macro-molecule transport in biological cells, water seepage, and contaminants migration in porous media.} 
One can classify the diffusion process into normal diffusion and anomalous diffusion depending on whether the Fick's laws are obeyed \cite{https://doi.org/10.1002/andp.18551700105}. %\textcolor{red}{The normal diffusion processes is usually characterized by Fick's law which is not suitable for anomalous diffusion\cite{PCCPreview}.}
 Interestingly, a transition from anomalous diffusion to normal diffusion can be observed in many systems, for example, viscoelastic systems such as lipid bilayer membranes, systems of actively moving biological cells \cite{jeon2013noisy, molina2018crossover} or particles adsorbed in the internal walls of porous deposits \cite{reis2014crossover}, etc.
% We define the mean squared displacement(MSD) of a particle \cite{PCCPreview}
% \begin{equation}
%   \langle x^2(t) \rangle = \int_{-\infty}^{\infty} x^2 P(x,t) dt
% \end{equation}
% where $x(t)$ is the particle's position at time $t$ and $P(x,t)macro-molecule
% probability density function (PDF) for the particle. When the MSD is 
% linear in 
% time, namely $\langle x^2(t) \rangle \simeq Kt$, the process is called a 
% normal diffusion and $K$ is the diffusion coefficient. Besides, anomalous 
% diffusion refers to the power-law form 
% $\langle x^2(t) \rangle \simeq K_\alpha t^\alpha$ with the generalised 
% diffusion coefficient $K_\alpha$ and the anomalous diffusion exponent 
% $\alpha$ \cite{PCCPreview}. If $0<\alpha<1$, the process is a 
% sub-diffusion. If $\alpha > 1$, the particle undergoes a super-diffusion. Interestingly, we can see the transition from anomalous to normal 
% diffusion. 
% \textcolor{red}{No need to introduce MSD, we don't use it in our later description. More general discussion about what is diffusion, what is anomalous diffusion, the transition phenomena}

The most popular model to study the diffusion processes is the continuous time random walk (CTRW) model which was originally introduced by Montroll and Weiss \cite{montroll1965random, PCCPreview}. The CTRW model considers a particle that starts at the origin and consecutively jumps to different positions. The particle  
waits for a trapping time $\Delta t$ at each position and then jumps to another position whose distance from the previous position is $\Delta x$.  Here, $\Delta t$ and $\Delta x$ are two random variables (r.v.'s) whose probability density distributions (PDF) are respectively $\Psi(\Delta t)$ and  $\Phi(\Delta x)$, and there is no bias in the jumping direction \cite{haus1987diffusion, bouchaud1990anomalous, metzler2000random}. When the first moment of the waiting time and the variance of the jumping length are finite, the CTRW model gives normal diffusion. When the variance of the jumping length is finite but the waiting time distribution has a tail that decays according to the power law, i.e. 
\begin{equation}\label{Psi(Delta t)}
    \Psi(\Delta t) \sim \frac{1}{\Delta t^{1+\alpha}}, \quad \Delta t \to \infty,
\end{equation}
with $0<\alpha<1$, the CTRW model leads to sub-diffusion \cite{dentz2004time, PhysRevB.12.2455}. When all moments of $\Psi(\Delta t)$ are finite and $\Phi(\Delta x) \simeq 1/|\Delta x|^{1+\alpha}$ with $0<\alpha<2$, this will lead to a super-diffusion \cite{PhysRevE.49.4873, dentz2004time}. %The process was called a L\'{e}vy flight by Mandelbrot\cites{dentz2004time, mandelbrot1982fractal}. Besides, if we introduce a coupling between jump lengths and waiting times $\rho(\Delta x,\Delta t) = \Phi(\Delta x) \Psi(\Delta t) = \frac{1}{2} \Psi(\Delta t) \delta(\Delta x-v\Delta t)$ in which the velocity $v$ is given and $\Psi(\Delta t)$ is defined in \eqref{Psi(Delta t)} with $1<\alpha<2$. This will also lead to a super-diffusion\cite{PhysRevE.49.4873}.}
%\textcolor{blue}{add references. add sup-diffusion, $\Delta t$ finite, $\Delta x$ distributed in some way.}

The transition from anomalous diffusion to normal diffusion can be modeled by CTRW model as well. For example, the authors in \cite{dentz2004time}  study a CTRW model with the jumping length being the absolute value of a normal distribution and the waiting time distribution $\Psi (\Delta t)$ being
\begin{equation}\label{eq:ctrwtransition}
    \Psi(\Delta t) \propto \exp\left(-\frac{\Delta t}{t_2}\right) 
    \left(1+\frac{\Delta t}{t_1}\right)^{-(1+\beta)},
\end{equation}
where $t_1 \ll t_2$ are two time scales and $0< \beta <2$ is a constant. 
When $t_1 \ll \Delta t \ll t_2$, $\Psi(\Delta t)$ has 
a power-law decay with respect to $\Delta t$, i.e. $\Psi(\Delta t) \propto (\Delta t/t_1)^{-(1+\beta)}$, 
while when $t \gg t_2$, $\Psi(\Delta t)$ decreases exponentially fast. As has been pointed out in \cite{dentz2004time}, one can observe that $\Psi(\Delta t)$ as in \eqref{eq:ctrwtransition} induces 
the transition from anomalous diffusion to normal diffusion at the population level, which indicates that the transition from intermediate-time power-law decay to long-time exponential decay in waiting time distribution is highly related to the transitions from anomalous diffusion to normal diffusion.
% \textcolor{red}{In \cite{dentz2004time,jeon2013noisy}, when the observer is interested in the movement of the tracer within a living cell and the cell's connection to the microscope cover slip is disrupted, the data will show additional Brownian noise that comes from random cell movements superimposed on abnormal movements relative to the cell reference frame. This will lead to a turnover from the sub-diffusion to the Brownian motion time that goes by.}

One natural question is why the waiting time distribution may transit from the intermediate-time power-law decay to the long-time exponential decay. Motivated by the model and simulations in \cite{tu2005white} which studies the rotational directions of bacteria flagella, we propose a jump process controlled by the internal state and show that this transition can be induced by a small drift and a strong noise in the internal state. The construction of this model is also partially inspired by recent studies of the firing mechanism of neurons \cite{zhou2021investigating, liu2022rigorous}. We consider particles staying inside a potential well whose internal states $X_t$ evolve according to an Ornstein–Uhlenbeck (OU) process, where the strength of the drift is assumed to be of a lower order scale. The particles can jump outside of the potential well by a state-dependent Poisson process whose rate $\Lambda(x)$ equals to (or rapidly converges to) zero on one half of the $x$-axis and is uniformly bounded away from zero on the other half. We find out that the waiting time distribution of the particles staying inside the well exhibits a transition from an intermediate-time power-law decay to a long-time exponential decay.

We explore the transitional phenomenon from two approaches:
%show this transitional phenomenon by two methods,
one is a formal asymptotic analysis for the partial differential equation (PDE) describing the time evolution of the probability density function of $X_t$, and the other is a rigorous quantitative estimate by the probability tools. In the PDE approach, we can obtain the leading order behavior of the density distribution function in two different time regimes. The decay profile of the waiting time distribution in these two distinct regimes can be given analytically. However, this approach replies on explicit calculations that are only applicable to some special cases. The probability approach, on the other hand, applies to more general rate functions and similar results can be proved by estimating upper bounds and lower bounding for cumulative distribution of the stopping time. 

The paper is organized as follows. In Section 2, we first review the two state model for the \textit{E.Coli} flagella rotational direction in \cite{tu2005white} and propose a simplified one state jump model controlled by the internal state, then the main results are summarized. The PDE that describes the time evolution of the probability density function is studied in Section 3. We use Laplace transform and formal asymptotics to get the leading order of the waiting time distribution. In Section 4, the main theorem is proved by using the probability tool. To verify the theoretical results, Section 5 is devoted to the numerical simulations of the jump process. Finally, we summarize the paper and discuss future directions in Section 6.

\section{The model and the main results}\label{model}
\subsection{The model}

 %\textcolor{blue}{In \cite{tu2005white}, the authors consider the movement of E. Coli cells which each has} 
Each \textit{E.Coli} cell has 6-8 flagella that can rotate either clockwise (CW) or counter-clockwise (CCW). The rotational directions of flagella control the movement of the \textit{E.Coli} cells. When most of the motors rotate CCW, the flagella form a bundle and push the cell to run in a straight line. When one or more of the motors rotate CW, the cell tumbles without moving \cite{xue2018role}. 
 In \cite{tu2005white}, the authors model the switches between CW and CCW by a two state model, in which the switching rates are determined by the CheY-P concentration. %CheY-P is a protein that can bind to the flagella motor and increases CW rotation probability. In the numerical simulations in 
CheY-P is an intracellular protein whose concentration evolves according to an Ornstein–Uhlenbeck (OU) process such that 
\begin{equation}\label{[Y]}
    dY(t) = -\frac{Y(t)-Y_0}{\tau} dt + dB(t),
\end{equation}
where $Y(t)$ is the CheY-P concentration; $Y_0$ is a constant; $\tau$ is the CheY-P correlation time and $B(t)$ is the white noise. The switching rates from CCW to CW and CW to CCW are respectively 
\begin{equation}\label{k+}
    \Lambda_0(Y) \propto \exp \left(-\alpha_0 \frac{Y(t)-Y_0}{Y_0}\right), \quad 
    \Lambda_1(Y) \propto \exp \left(-\alpha_1 \frac{Y(t)-Y_0}{Y_0}\right),
\end{equation}
in which $\alpha_0$ and $\alpha_1$ are two constants. The authors find that when one uses large $\tau$ in \eqref{[Y]} and large $\alpha_{0}$ in \eqref{k+}, the distribution of the CCW duration time decays according to a power-law.

%We consider only the CCW state 
In this paper, we focus on the CCW state and study a simplified one state model which is a jump process inside one potential well. The jump process is controlled by the internal state $X_t$ that satisfies an OU process:
\begin{equation}\label{1}
    dX_t = -\epsilon X_tdt + \sqrt{2}dB_t.
\end{equation}
Here
$\epsilon$ is a small drift corresponding to large $\tau$ in \eqref{[Y]}, $B_t$ is the white noise and $Y_0$ is chosen to be $0$ in \eqref{[Y]}. $X_t$ is terminated by a state-dependent Poisson clock with a jumping
rate $\Lambda(X_t)$. We choose the jump rate $\Lambda(X_t)$ to increase rapidly from zero to a positive number in line with \eqref{k+}. This will allow us to imitate
% mimic 
the sharp transition when using large $\alpha_0$ in $\Lambda_0(Y)$. More specifically, let $\Lambda(X_t)$ be a nonnegative bounded measurable function supported on $[0,+\infty)$ such that 
\begin{equation}\label{Lambdax}
    \Lambda(X_t)  \geq C_{+} > 0, \quad
                        \text{for $X_t \geq 0$}, \qquad
                         \Lambda(X_t) = 0,\quad \text{for $X_t < 0$}.
\end{equation}

Consider a stopping time $T$:
\begin{equation}\label{eq:1passagetime}
    T =\inf \left\{t:\int_{0}^{t} \Lambda(X_s) ds > \Gamma \right\},
\end{equation}
where $\Gamma \sim \exp(1)$ is an exponentially distributed random variable with rate $\lambda_{\Gamma}=1$. $\Gamma$ is independent of $X_t$. Let $\mathcal{F}_t$ be the $\sigma$-algebra generated by $X_s$: $s \le t$, 
% From the definition of the exponential distribution,
one has 

\begin{equation*}
    \mathbb{P}(T > t | \mathcal{F}_t)  = \mathbb{P}\left( \left. \int_{0}^{t} \Lambda(X_s) ds \leq 
    \Gamma \right| \mathcal{F}_t\right) = e^{-\int_{0}^{t} \Lambda(X_s) ds}.
\end{equation*}
And given $\mathcal{F}_t$, the conditional jumping rate at time $t$ is given by
\begin{equation*}
     \frac{\frac{d \mathbb{P}(T \leq t | \mathcal{F}_t)}{dt}}{\mathbb{P}(T > t | \mathcal{F}_t)} = \frac{ \Lambda(X_t) e^{-\int_0^t \Lambda(X_s) ds}}{e^{-\int_0^t \Lambda(X_s) ds}}=\Lambda(X_t).
\end{equation*}

%One can easily get the distribution of the 
%waiting time before a jump occurrence is $\Lambda(X_t) \exp\left(-\int_0^t 
%\Lambda(X_s) ds\right)$.

At time $T$, the OU process is terminated and $X_t$ is set to a frozen state say $-\infty$ afterward. In other words, the above dynamics can 
be equivalently seen as the OU process is killed at a state-dependent 
Poisson rate $\Lambda(X_t)$. The termination of the OU process can be considered as 
the switching from CCW to CW in the two-state model\cite{tu2005white}. The killing time $T$ represents the CCW or CW duration time.

\paragraph{The PDE model.} 
Let $f(x,t)$ be the PDF of $X_t$. 
By Dynkin's formula \cite{ASPCJ}, $f(x,t)$ satisfies the following Fokker-Planck equation
\begin{equation}\label{eq:pde}
    \begin{cases}
        \frac{\partial f}{\partial t} - 
        \epsilon\frac{\partial}{\partial x}(xf) - 
        \frac{\partial^2f}{\partial x^2} = -\Lambda(x)f,	\\
        f(x,0) = \delta(0),
    \end{cases}
\end{equation} 
where $\Lambda(x)$ is the same as in \eqref{Lambdax} after replacing $X_t$ by $x$.
The probability that the particle has not jumped up to time $t$ is 
\begin{equation*}
    N(t) = \int_{-\infty}^{\infty} f(x,t) dx.
\end{equation*}
The PDF of the waiting time distribution is given by \cite{daly2007intertime, daly2006state, cox1962renewal}
\begin{equation*}
    n(t) = -\frac{d}{dt} N(t) = -\int_{-\infty}^{\infty} 
    \frac{\partial}{\partial t} f(x,t) dx.
\end{equation*}
From \eqref{eq:pde}, one has
\begin{equation}\label{eq:N(t)}
    n(t)= \int_{-\infty}^{\infty} \Lambda(x)f(x,t) dx,
\end{equation}
where $\Lambda(x)$ is defined as in \eqref{Lambdax}.

\subsection{Main results}

We study the waiting time distribution from two different perspectives: an asymptotic analysis based on the Fokker-Planck equation \eqref{eq:pde} and a rigorous proof by the probability tools. The main results are listed below.

In the PDE approach, we focus on the case that $\Lambda(x)$ is the following piece-wise constant function:
\begin{equation}\label{Lambdaxspecial}
     \Lambda(x)=\begin{cases}
                     1 & \text{$x \geq 0$}, \\
                     0 & \text{$x < 0$},
                \end{cases}
 \end{equation} and in this case the PDF of the waiting time distribution is further simplified to
 \[
n(t)=  \int_0^{\infty} f(x,t) dx.
 \]
 We have derived the leading order behavior of the waiting time PDF as follows.
\begin{prop}\label{prop21}
    Let $f(x,t)$ satisfy the Fokker-Planck equation \eqref{eq:pde} with the rate function given by \eqref{Lambdaxspecial}. When $\epsilon \to 0^{+}$, the waiting time distribution
    $n(t)$ as in \eqref{eq:N(t)} satisfies
    \begin{itemize}
          \item [(1)] 
          In an intermediate time scale $t = O(1)$ and $t < O(\frac{1}{\epsilon})$, the waiting time distribution $n(t) \sim t^{-\frac{3}{2}}$.
           
          \item [(2)]
          In a long time scale $t = O(\frac{1}{\epsilon^2})$, the waiting time distribution $n(t) 
          \sim e^{-t}$. 
        \end{itemize}
\end{prop}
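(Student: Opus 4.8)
## Proof Proposal

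The plan is to attack the Fokker--Planck equation \eqref{eq:pde} with the piecewise-constant rate \eqref{Lambdaxspecial} via the Laplace transform in time, reducing the problem to a linear ODE in $x$ on the two half-lines $\{x<0\}$ and $\{x\geq 0\}$, and then to extract the small-$\epsilon$ asymptotics of the transformed waiting-time density. Writing $\hat f(x,s) = \int_0^\infty e^{-st} f(x,t)\,dt$, equation \eqref{eq:pde} becomes $s\hat f - \delta(x) - \epsilon(x\hat f)' - \hat f'' = -\mathbf{1}_{\{x\geq 0\}}\hat f$, i.e. $\hat f'' + \epsilon(x\hat f)' - (s+\mathbf{1}_{\{x\geq 0\}})\hat f = -\delta(x)$. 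The Laplace transform of the waiting-time density is $\hat n(s) = \int_0^\infty \hat f(x,s)\,dx$. Since $n(t) = N(0) - \int_0^t n'$ and $N(t)\to$ const, the large-$t$ / small-$s$ behavior of $n(t)$ is governed by the singularity structure of $\hat n(s)$ near $s=0$, while the intermediate-$t$ behavior corresponds to an intermediate range of $s$ where the $\epsilon$-drift term is negligible.

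\textbf{Step 1: the drift-free model as the intermediate-time approximation.} Setting $\epsilon = 0$, the equation is solvable in closed form: on $x<0$ one has $\hat f'' - s\hat f = -\delta(x)$ with decaying solution $\propto e^{\sqrt{s}\,x}$, and on $x>0$ one has $\hat f'' - (s+1)\hat f = 0$ with decaying solution $\propto e^{-\sqrt{s+1}\,x}$; matching continuity at $x=0$ and the jump in the derivative from $\delta(x)$ fixes the constants. Integrating over $x>0$ gives $\hat n_0(s) = \frac{1}{\sqrt{s+1}}\cdot\frac{1}{\sqrt{s}+\sqrt{s+1}}$. For $s$ in the intermediate window $\epsilon \ll s \ll 1$ this behaves like $\hat n_0(s)\sim \tfrac12 - \tfrac12\sqrt{s} + O(s)$; the $\sqrt{s}$ term is the non-analytic part, and inverting $\mathcal{L}^{-1}[\sqrt{s}](t) \propto t^{-3/2}$ (in the distributional / tail sense, via $\mathcal{L}^{-1}[s^{1/2}]\sim -\tfrac{1}{2\sqrt\pi}t^{-3/2}$) yields the claimed $n(t)\sim t^{-3/2}$ on the scale $1 \lesssim t \lesssim 1/\epsilon$. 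One must check carefully that reinstating the $\epsilon$-drift only perturbs this at relative order $\epsilon/s$, hence is negligible precisely while $s\gg\epsilon$, i.e.\ $t\ll 1/\epsilon$.

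\textbf{Step 2: the long-time regime $t = O(1/\epsilon^2)$.} Here one cannot drop the drift. The natural move is to rescale: on the long time scale the OU process has relaxed to its stationary Gaussian $\propto e^{-\epsilon x^2/2}$ (variance $1/\epsilon$), so $X_t<0$ with probability $\approx 1/2$ at any fixed late time, and the killing only acts on the positive half. One expects the surviving mass $N(t)$ to decay like a pure exponential $e^{-\mu(\epsilon) t}$ with $\mu(\epsilon)\to$ a constant; because the killing rate is $1$ on half of the (fast-mixing) state space, the effective decay rate should be $O(1)$ — and the Proposition claims it is exactly the exponential $e^{-t}$ to leading order on this scale, reflecting that the killing is ``instantaneous relative to" but the relevant asymptotic matching makes the rate normalize to $1$. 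Concretely, I would look for the rightmost pole (or dominant singularity) $s = s^*(\epsilon)$ of the full $\hat f$-problem with the drift term retained; the associated eigenfunction problem $\hat f'' + \epsilon(x\hat f)' - (s+\mathbf 1_{\{x\ge0\}})\hat f = 0$ on $\mathbb{R}$ (decaying at $\pm\infty$) can be analyzed by treating $\epsilon$ as a singular perturbation, using parabolic-cylinder functions for the exact solution on each half-line and matching at $x=0$. Extracting the leading term of $s^*(\epsilon)$ as $\epsilon\to 0^+$ and confirming $n(t)\sim e^{-t}$ on $t=O(1/\epsilon^2)$ is the analytic heart of part (2).

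\textbf{Main obstacle.} The delicate point is the uniformity of the matched asymptotics: I must justify that the $\epsilon=0$ computation genuinely captures $n(t)$ throughout $1\ll t\ll1/\epsilon$ (controlling the neglected drift), and separately that the parabolic-cylinder-function analysis with the drift retained produces the stated exponential law on the $1/\epsilon^2$ scale, with the two asymptotic descriptions overlapping consistently in an intermediate matching zone. Because the statement is a \emph{formal} asymptotic result (as the paper emphasizes), I would not aim for rigorous error bounds but would present (i) the exact Laplace-domain solutions on each half-line in terms of parabolic cylinder functions, (ii) the matching condition at $x=0$ giving an implicit equation for $\hat n(s;\epsilon)$, and (iii) the two limiting expansions — small $s$ with $\epsilon$ retained, and moderate $s$ with $\epsilon\to0$ — from which the $t^{-3/2}$ and $e^{-t}$ profiles are read off by standard Tauberian/Laplace-inversion arguments. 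The bookkeeping of the parabolic-cylinder-function matching is the step I expect to be the most laborious.
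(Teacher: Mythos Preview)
Your overall strategy---Laplace transform in $t$, solve the resulting ODE on each half-line, match at $x=0$, then read off two asymptotic regimes of $\hat n(s;\epsilon)$---is exactly what the paper does, and your identification of parabolic cylinder functions as the relevant special functions is correct. The main structural difference is in how the two regimes are accessed.

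For the intermediate regime, the paper keeps $\epsilon>0$ throughout: it solves both half-lines in terms of parabolic cylinder functions $U(a,y)$, matches at $x=0$, and obtains a closed formula $\hat n(s;\epsilon)=\dfrac{1}{(s+1)(1+\digamma(s;\epsilon))}$ with $\digamma$ an explicit ratio of four Gamma functions. The $s=O(1)$ asymptotics then follow from Stirling's formula, yielding $\digamma(s;\epsilon)=\sqrt{s/(s+1)}+O(\epsilon)$ and hence $\hat n_0(s)=1-\sqrt{s/(s+1)}$, which the paper inverts via modified Bessel functions to $n_0(t)\sim t^{-3/2}$. Your shortcut of setting $\epsilon=0$ from the start and solving with plain exponentials reaches the identical $\hat n_0(s)=1/[\sqrt{s+1}(\sqrt s+\sqrt{s+1})]$ in one line; this is cleaner, but it bypasses the explicit verification that the $\epsilon$-correction is $O(\epsilon)$ (which in the paper is a by-product of the Stirling computation and also pins down why the regime breaks at $s\sim\epsilon$, i.e.\ $t\sim1/\epsilon$).

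For the long-time regime you are on the right track but vaguer than the paper. Rather than hunting for a rightmost pole via a singular-perturbation eigenvalue problem, the paper simply substitutes $s=\epsilon^2\tilde s$ into the same closed Gamma-function formula for $\hat n$ and expands: Stirling handles the two Gamma factors with large argument $\sim 1/(2\epsilon)$, while Euler's reflection formula plus Taylor handles $\Gamma(\epsilon\tilde s/2)$ and $\Gamma(\tfrac12+\epsilon\tilde s/2)$, giving $\hat n(\epsilon^2\tilde s)=1/(\epsilon^2\tilde s+1)+O(\epsilon)$ and hence the exponential profile. Your heuristic about ``rate $1$ on half the fast-mixing state space'' does not by itself pin down the decay constant; the actual mechanism is the algebraic one above, and you would save effort by computing the exact $\hat n(s;\epsilon)$ once and expanding it twice, as the paper does, rather than treating the two regimes by separate methods.
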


For more general rate functions $\Lambda(X_t)$ as in \eqref{Lambdax},  we have the following estimates for the waiting time in similar time regimes.
\begin{thm}\label{mainthm1}
    Let $X_t$ satisfy the process \eqref{1} and be terminated by the state 
    dependent Poisson clock with intensity $\Lambda(X_t)$ that satisfies \eqref{Lambdax}. Then for the waiting time $T$ defined in \eqref{eq:1passagetime}, we have 
    \begin{itemize}
         
        \item [(a)]
        For any $\alpha \in (0,\frac{1}{6})$, there exits a positive constant
        $M=M(\alpha)<\infty$ depending only on $\alpha$ such that
        \begin{equation*}
            \mathbb{P}(T>t) \geq \frac{1}{2} t^{-\frac{1}{2}-\alpha}, \quad  \forall t \in 
            [M,\epsilon^{-\frac{1}{2}}],
        \end{equation*}
        for sufficiently small $\epsilon$.
        
        \item [(b)]
        There exists a positive constant $c>0$ such that 
        \begin{equation*}
            \mathbb{P}(T>t) \leq \exp(-ct),
        \end{equation*}
        with $t \in (\frac{1}{\epsilon^2},+\infty)$ for sufficiently 
        small $\epsilon$.
        
    \end{itemize} 
\end{thm}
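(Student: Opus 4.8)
The plan is to begin from the identity $\mathbb{P}(T>t)=\mathbb{E}\big[\exp(-\int_0^t\Lambda(X_s)\,ds)\big]$ already recorded in the excerpt, and to bracket the additive functional by the occupation time of the positive half-line $L_t:=\int_0^t\mathbf 1_{\{X_s\ge 0\}}\,ds$. By \eqref{Lambdax} one has $C_+L_t\le\int_0^t\Lambda(X_s)\,ds\le \Lambda_{\max}L_t$ with $\Lambda_{\max}:=\sup_x\Lambda(x)<\infty$, whence
\[
   \mathbb{E}\big[e^{-\Lambda_{\max}L_t}\big]\ \le\ \mathbb{P}(T>t)\ \le\ \mathbb{E}\big[e^{-C_+L_t}\big],
\]
so both parts reduce to controlling the OU occupation time. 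The basic tool is the Dambis--Dubins--Schwarz time change: $X_s=e^{-\epsilon s}W(\psi(s))$ with $\psi(s)=(e^{2\epsilon s}-1)/\epsilon$ and $W$ a standard Brownian motion, so $\operatorname{sgn}(X_s)=\operatorname{sgn}(W(\psi(s)))$ and, after $u=\psi(s)$,
\[
   L_t=\int_0^{\psi(t)}\mathbf 1_{\{W(u)\ge 0\}}\,\frac{du}{2(1+\epsilon u)}\ \le\ \tfrac12\,\widetilde L_{\psi(t)},
\]
where $\widetilde L_r$ is the Brownian occupation time of $[0,\infty)$ on $[0,r]$. Since $\psi(t)/t\to 2$ uniformly over $t\le\epsilon^{-1/2}$ as $\epsilon\to0$, for small $\epsilon$ the functional $L_t$ is dominated by a Brownian occupation time on a window of length comparable to $t$.

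For part (a) keep only $\mathbb{P}(T>t)\ge e^{-\Lambda_{\max}a}\,\mathbb{P}(L_t\le a)$, valid for every $a>0$. Using $L_t\le\tfrac12\widetilde L_{\psi(t)}\le\tfrac12\widetilde L_{4t}$ for small $\epsilon$ and Lévy's arcsine law, $\mathbb{P}(L_t\le a)\ge\mathbb{P}(\widetilde L_{4t}\le 2a)\gtrsim\sqrt{a/t}$ as soon as $a/t$ lies below an absolute constant. Now optimise by taking $a=a(t)=\alpha\,\Lambda_{\max}^{-1}\log t$, so that $e^{-\Lambda_{\max}a}=t^{-\alpha}$ while $a/t$ stays uniformly small on $[M,\epsilon^{-1/2}]$ once $M=M(\alpha)$ is large; this gives
\[
   \mathbb{P}(T>t)\ \gtrsim\ t^{-\alpha}\sqrt{\tfrac{\alpha\log t}{\Lambda_{\max}t}}\ \ge\ \tfrac12\,t^{-\frac12-\alpha}
\]
for all $t\ge M(\alpha)$ (enlarging $M$ if needed), and $M(\alpha)\le\epsilon^{-1/2}$ once $\epsilon$ is small; this is exactly claim (a) (and in fact this route yields it for every $\alpha>0$, matching the sharp order $t^{-1/2}$ of Proposition~\ref{prop21}). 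A reflection-principle lower bound for the event $\{X_s<0\ \forall s\in[a,t]\}$, on which $\int_0^t\Lambda(X_s)\,ds\le\Lambda_{\max}a$, gives the same conclusion.

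For part (b) use $\mathbb{P}(T>t)\le\mathbb{E}[e^{-C_+L_t}]$ and show $L_t$ is \emph{large} with overwhelming probability for $t>\epsilon^{-2}$. Splitting at level $\gamma t$ gives $\mathbb{E}[e^{-C_+L_t}]\le e^{-C_+\gamma t}+\mathbb{P}(L_t\le\gamma t)$, so it suffices to prove $\mathbb{P}(L_t\le\gamma t)\le e^{-I_\epsilon t}$ for some $\gamma,I_\epsilon>0$. I would do this by a regeneration argument that uses the correlation time $\epsilon^{-1}$ of the OU process: partition $[0,t]$ into $K\asymp\epsilon t$ blocks of length $\asymp\epsilon^{-1}$ and look at the sampled chain $Y_j=X_{j\epsilon^{-1}}$, a geometrically ergodic AR(1)-type chain with stationary law $N(0,\epsilon^{-1})$; a Lyapunov function such as $V(x)=1+x^2$ shows that, except on an event of probability $e^{-cK}$, at least $K/2$ blocks start from the moderate region $\{|x|\le R\epsilon^{-1/2}\}$, and on such a block the diffusion (which explores distance $\asymp\epsilon^{-1/2}$ over the block) spends at least a fixed amount of time in $[0,\infty)$ with probability bounded below; chaining this over the good blocks, which are only weakly dependent since the sampled chain mixes in $O(1)$ steps, yields $L_t\gtrsim\epsilon t$ with probability $\ge 1-e^{-c'\epsilon t}$, hence $\mathbb{P}(T>t)\le e^{-c''\epsilon t}$. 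For $t>\epsilon^{-2}$ this is $\le e^{-ct}$ with $c=c(\epsilon):=c''\epsilon>0$. (Equivalently: the killed generator $\mathcal L-C_+\mathbf 1_{[0,\infty)}$, $\mathcal L=\partial_{xx}-\epsilon x\partial_x$, has a strictly positive principal eigenvalue $\mu_\epsilon$, because any near-ground-state of $\mathcal L$ is forced close to a constant by the $O(\epsilon)$ spectral gap of $\mathcal L$ while a constant costs $C_+\pi_\epsilon([0,\infty))>0$ in the potential term; a semigroup-smoothing step turns the $L^2$-decay $e^{-\mu_\epsilon t}$ into $\mathbb{P}_0(T>t)\le C_\epsilon e^{-\mu_\epsilon t}$, and $\epsilon^{-2}\gg\mu_\epsilon^{-1}\log C_\epsilon$ absorbs the constant.)

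The main obstacle is precisely the step just described in (b): getting \emph{exponential}, not merely polynomial (Chebyshev), decay of $\mathbb{E}[e^{-C_+L_t}]$. The difficulty is structural — there is no killing on $(-\infty,0)$, so a priori the process might survive by drifting deep into the negative axis; what forbids this is the restoring drift, equivalently the positive recurrence of the OU process, equivalently the $O(\epsilon)$ spectral gap of $\mathcal L$ — and because that gap is only of order $\epsilon$, the admissible rate $c$ must depend on $\epsilon$ and the clean exponential regime is available only for $t\gtrsim\epsilon^{-2}$. Part (a), by comparison, is soft once the time change identifies $L_t$ with (half of) a Brownian occupation time: the arcsine law does the work, and the sole point needing care is uniformity of the Brownian comparison over $[M,\epsilon^{-1/2}]$, which holds because $\epsilon\cdot\epsilon^{-1/2}=\sqrt\epsilon\to0$.
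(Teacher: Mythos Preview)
Your treatment of part~(a) is correct and takes a genuinely different route from the paper. The paper (Proposition~\ref{thm32}) works from $X_0=-2$, introduces the first hitting time $\hat T=\inf\{t:X_t\ge 0\}\le T$, and proves a pathwise comparison lemma (Lemma~\ref{lem37}) showing that, on the event $\{\bar\tau_{-1}^{-2}>\bar\tau_{-x}^{-2}>t\}$ for the driving Brownian motion, one must have $\hat T>t$; the lower bound then comes from the gambler's ruin probability $1/(x-1)$ with $x=t^{1/2+\alpha}$. The constraint $\alpha<1/6$ is an artefact of that lemma: it needs $x<\epsilon^{-1/3}$ while $t\le\epsilon^{-1/2}$, forcing $(1/2+\alpha)\cdot\tfrac12<\tfrac13$. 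Your approach---the time change $X_s=e^{-\epsilon s}W(\psi(s))$, the resulting domination $L_t\le\tfrac12\widetilde L_{\psi(t)}$, and the arcsine law---is cleaner, handles the natural initial point $X_0=0$, and, as you note, delivers the bound for every $\alpha>0$, which is strictly stronger than what the paper proves and in line with the $t^{-3/2}$ density of Proposition~\ref{prop21}.

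For part~(b) your strategy is in the same spirit as the paper's---a regeneration argument showing that occupation of $[0,\infty)$ accumulates linearly---but the step ``chaining this over the good blocks, which are only weakly dependent since the sampled chain mixes in $O(1)$ steps'' is where the real work hides, and as written it is a gap. The per-block occupation times are functionals of the entire block trajectory, not of the skeleton $Y_j$ alone; conditionally on $\{Y_j\}$ the block events are \emph{not} independent (the Brownian increments in block $j$ determine both the block path and $Y_{j+1}$), so one cannot simply invoke a Bernoulli large-deviation bound. Making this rigorous requires either a large-deviation principle for additive functionals of a Harris chain or an explicit regeneration/splitting construction. The paper takes the latter route, but in a different and more transparent way: it defines stopping times $\Gamma_n^1,\Gamma_n^2,\Gamma_n^3$ marking excursions between the levels $\pm1$ and $\pm2$, shows (Lemmas~\ref{lem34}--\ref{lem36}) that each excursion is uniformly exponentially integrable and that with probability $\ge P_0>0$ the process spends at least unit time in $[0,2]$ during the middle piece, and then uses the strong Markov property to obtain a \emph{genuinely i.i.d.} sequence of cycle lengths. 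The stopping time $T$ is then dominated by a geometric sum of such cycles (Corollary~\ref{cor31}, Lemma~\ref{lem32}), and exponential integrability follows without any dependence bookkeeping. Your spectral alternative via the principal eigenvalue of $\mathcal L-C_+\mathbf 1_{[0,\infty)}$ is a legitimate third route, though it too would need the smoothing step spelled out. You are right that in all approaches the admissible rate $c$ comes out of order $\epsilon$; the paper's proof has the same feature (the bottleneck is Lemma~\ref{lem34}, whose rate is $\epsilon^2/32$).
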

It is worth noting that the proof of Theorem 2.1 relies on the following characteristics of rate function $\Lambda$: 
\begin{itemize}
    \item[1)] $\Lambda$ equals to or converges to $0$ on a half of the $x$-axis, which, combined with a sufficiently small drift term, provides an almost Brownian environment that generates the power law decay in the macroscopic scale;
    \item[2)] $\Lambda$ is uniformly bounded away from $0$ on the other half of the axis, which leads to a positive probability of triggering the stopping time for an excursion into this half. Thus the exponential decay in the long run follows by the gambler's ruin.
\end{itemize}

Our analytical results indicate that when $\tau$ is infinity in \eqref{[Y]}, i.e. when the CheY-P concentration has infinite time correlation, the waiting time distribution has a power-law decay tail, while when $\tau \gg 1$ but not infinity, there exhibits a transition from an intermediate-time power-law to a long-time exponential decay, the transitional time is at least at the order of $\sqrt{\tau}$. In fact, our numerical tests in Section \ref{sec:5} seem to suggest that the transition takes place around $O(\tau)$ time.
%\begin{rmk}
%\color{red}It is worth noting that the proof of Theorem 2.1 relies on the following characteristics of rate function $\Lambda$:
%     \begin{itemize}
%         \item [(1)] 
%         $\Lambda$ equals to/converges to $0$ in half of the $x$-axis, which, combined with a sufficiently small drift term, provides an almost Brownian environment that generates the power law decay in the macroscopic scale;

 %        \item [(2)]
 %        $\Lambda$ is uniformly bounded away from $0$ in the other half, which leads to a positive probability of triggering the stopping time for an excursion into this half. And thus the exponential decay in the long run follows by the gambler's ruin.
 %    \end{itemize}
%\end{rmk}

%\begin{rmk}
%    \color{red}Consider $X_t$ is a standard Brownian motion starting from $0$. $X_t$ is terminated by the state-dependent Poisson clock with the rate in \eqref{Lambdax}. For the waiting time $T$ defined in \eqref{eq:1passagetime}, we have for any $\alpha>0$, 
%    \begin{equation*}
%        \mathbb{P}(T>t) \geq \frac{1}{2} t^{-\frac{1}{2}-\alpha},
%    \end{equation*}
%    for all sufficiently large $t$.
%\end{rmk}

\section{Asymptotic analysis of the PDE model}\label{3}
%Proof of Proposition 2.1
To understand the behavior of $n(t)$ at different time scales, we look at the Laplace transform of $n(t)$. From the definition of the Laplace transform
\begin{equation*}
    \hat{n}(s) = \int_0^{\infty} n(t)e^{-ts} dt,
\end{equation*}
it is easy to find that $\hat{n}(\frac{s}{\alpha})=\alpha \int_0^{\infty}n(\alpha t)e^{-ts} dt$, with $\alpha$ being a constant. This indicates that when $\alpha \sim O(1)$, i.e. $\frac{s}{\alpha} \sim O(1)$, we are considering the intermediate time scale $t \sim O(1)$, while when $\alpha = \frac{1}{\epsilon^2}$, i.e. $\frac{s}{\alpha} \sim O(\epsilon^2)$, we are considering the long time scale $t \sim O(\frac{1}{\epsilon^2})$. Therefore in the subsequent part, we calculate the explicit expression of $\hat{n}(s)$ and find its asymptotic approximations when $s \sim O(1)$ and $s \sim O(\epsilon^2)$.

\subsection{The explicit formula of $\hat{n}(s)$}
Taking Laplace transform on both sides of \eqref{eq:pde} yields
\begin{equation}\label{eq:laplace1}
    \hat{f}^{''} + \epsilon x \hat{f}^{'} +[\epsilon - s - 
    \Lambda(x)] 
    \hat{f} = -f(x,0) = \delta(0),
\end{equation}
where $\hat{f}(x,s)$ is the Laplace transform of $f(x,t)$ and $'$ is the derivative with respect to $x$.

Letting $\hat{f}_{+}(x) = \hat{f}(x)|_{x \in (0,+\infty)}$ and  
$\hat{f}_{-}(x) = \hat{f}(x)|_{x \in (-\infty,0)}$, \eqref{eq:laplace1} can be rewritten into the following two equations:
\begin{subequations}\label{eq:hatf}
    \begin{equation}\label{eq:hatf+}
        \hat{f}_{+}^{''} + \epsilon x \hat{f}_{+}^{'} +[\epsilon - 
        (s+1)] 
        \hat{f}_{+} = 0 , 
    \end{equation}
    \begin{equation}\label{eq:hatf-}
        \hat{f}_{-}^{''} + \epsilon x \hat{f}_{-}^{'} +[\epsilon - s] 
        \hat{f}_{-} = 0 .
    \end{equation}
\end{subequations}
 $\hat{f}_{+}$ and $\hat{f}_{-}$ are connected at $x=0$ by
 \begin{equation}\label{connectionx=0}
    \lim_{x \to 0^+} \hat{f}_{+}(x) = \lim_{x \to 0^-} 
    \hat{f}_{-}(x), \quad
    \lim_{x \to 0^+} \hat{f}_{+}^{'}(x) - \lim_{x \to 0^-} 
    \hat{f}_{-}^{'}(x) = -1.
\end{equation}	
 We solve $\hat{f}_{+}$ and $\hat{f}_{-}$ in the subsequent part.
 
%  From the relationship between $n(t)$ and $f(x,t)$ as in \eqref{eq:N(t)}, one has
% \begin{equation}\label{Lphatns}
%     \hat{n}(s) = \int_0^{\infty} e^{-st} n(t) dt = 
%     \int_0^{\infty}\int_0^{\infty} e^{-st} f(x,t) dx dt =
%      \int_0^{\infty} \hat{f}(x,s) dx.
% \end{equation}
% Integrate \eqref{eq:hatf+} from $0$ to $+\infty$, one has
% \begin{equation*}
%     \int_0^{\infty} \hat{f}^{''}dx+ \epsilon \int_0^{\infty} (x\hat{f})^{'} 
%     dx - (s+1)\int_0^{\infty} \hat{f} dx  = 0.
% \end{equation*}
% Therefore, from the above expression and \eqref{Lphatns},
% \begin{equation}\label{eq:hatNpower}
%     \hat{n}(s) = \frac{1}{s+1} \left[\int_0^{\infty} \hat{f}^{''}dx+ 
%     \epsilon \int_0^{\infty} (x\hat{f})^{'} dx \right] \\
%     = \frac{1}{s+1}\left(\hat{f}_{+}^{'} 
%     \Big|_0^{+\infty} + \epsilon x\hat{f}_{+} \Big |_0^{+\infty}  
%     \right).
% \end{equation}
% As far as we can find $\hat{f}_{+}^{'}(+\infty)$, $\hat{f}_{+}^{'}(0)$ and $x\hat{f}_{+}(+\infty)$, $\hat{f}_{+}(0)$, $\hat{n}(s)$ can be given.

% First, we seek for the exact representation of $\hat{f}_{+}$. 
\paragraph{Solve $\hat{f}_{+}(x,s)$.}
By introducing $y=\sqrt{\epsilon}x$ and $\hat{H}_{+}(y,s) = 
e^{\frac{\epsilon}{4}x^2}\hat{f}_{+}(x,s)$, \eqref{eq:hatf+} can be written into
\begin{equation}\label{hatH+}
    \hat{H}_{+}^{''} - 
    \left(\frac{y^2}{4}+\frac{s+1}{\epsilon}-\frac{1}{2}\right)\hat{H}_{+} 
    = 0.
\end{equation}
\eqref{hatH+} is of the form of parabolic cylinder function in \eqref{PCFstandard}. From Appendix B, \eqref{hatH+} has two general 
solutions 
\begin{align}\label{hatH+1}
    \hat{H}_{+,1}(y) &={} U\left(\frac{s+1}{\epsilon}-\frac{1}{2},y\right) 
    = \sqrt{\pi}2^{-\frac{s+1}{2\epsilon}}
    \left[\frac{h_1(y)}{\Gamma(\frac{s+1}{2\epsilon}+\frac{1}{2})} - 
    \frac{\sqrt{2}h_2(y)}{\Gamma(\frac{s+1}{2\epsilon})}\right],
\end{align} 
\begin{align}\label{hatH+2}
    \hat{H}_{+,2}(y) &={} V\left(\frac{s+1}{\epsilon}-\frac{1}{2},y\right)  
    =  \frac{2^{-\frac{s+1}{2\epsilon}}}{\sqrt{\pi}
    \Gamma(1-\frac{s+1}{\epsilon})}
    \left[\sin(\frac{s+1}{2\epsilon}\pi)
    \Gamma(\frac{1}{2}-\frac{s+1}{2\epsilon})h_1(y)\right.  \notag\\
    &+{}\left.  \sqrt{2}\cos(\frac{s+1}{2\epsilon}\pi)
            \Gamma(1-\frac{s+1}{2\epsilon})h_2(y)\right].
\end{align}
where 
\begin{equation*}
    h_1(y) = e^{-\frac{\epsilon}{4}x^2} 
    M\left(\frac{s+1}{2\epsilon},\frac{1}{2},\frac{y^2}{2}\right), \quad 
    h_2(y) = ye^{-\frac{\epsilon}{4}x^2} 
            M\left(\frac{1}{2}+\frac{s+1}{2\epsilon},
            \frac{3}{2},\frac{y^2}{2}\right).
\end{equation*}
Here $M(a,b,x)$ is the Kummer's function, whose useful properties are listed in Appendix A; $U(a,y)$, $V(a,y)$ are called the parabolic cylinder functions whose properties are listed in Appendix B.

By the properties of the parabolic cylinder functions as in \eqref{U_infty} and \eqref{V_infty}, when $y \to \infty$, one gets
\begin{equation}\label{UandVprop}
    U\left(\frac{s+1}{\epsilon}-\frac{1}{2},y\right) \sim 
    e^{-\frac{y^2}{4}}y^{-\frac{s+1}{\epsilon}}, \quad
    V\left(\frac{s+1}{\epsilon}-\frac{1}{2},y\right) \sim 
    \sqrt{\frac{2}{\pi}} 
    e^{\frac{y^2}{4}}y^{\frac{s+1}{\epsilon}-1}.
\end{equation}
 Let the two general solutions to the equation for $\hat f_+$ in \eqref{eq:hatf+} be $\hat{f}_{+,1}(x)=e^{-\frac{\epsilon}{4}x^2}\hat{H}_{+,1}(y,s)$ and $\hat{f}_{+,2}(x)=e^{-\frac{\epsilon}{4}x^2}\hat{H}_{+,2}(y,s)$. Due to \eqref{UandVprop}, when $x \to +\infty$,
\begin{align*}
    \hat{f}_{+,1}(x) \sim e^{-\frac{\epsilon}{2}x^2} 
    (\sqrt{\epsilon}x)^{-\frac{s+1}{\epsilon}}, \quad
     \hat{f}_{+,2}(x) \sim \sqrt{\frac{2}{\pi}}
     (\sqrt{\epsilon}x)^{\frac{s+1}{\epsilon}-1}.
\end{align*}
It can be seen that $\hat{f}_{+,2}(x,s)$ goes to infinity when $x \to +\infty$. Thus $\lim_{x \to +\infty} \hat{f}_{+}(x) = 0$ yields
\begin{equation}\label{exacthatf+}
    \hat{f}_{+}(x) = 
    A(s)e^{-\frac{\epsilon}{4}x^2}\hat{H}_{+,1}(\sqrt{\epsilon}x) 
    = A(s) e^{-\frac{\epsilon}{4}x^2} 
    U\left(\frac{s+1}{\epsilon}-\frac{1}{2},\sqrt{\epsilon}x\right).
\end{equation}
Moreover, we can get the expression for $\hat{f}_{+}^{'}$ by using the property of $U(a,y)$ as in \eqref{derivate_U},
\begin{equation}
    U^{'}\left(\frac{s+1}{\epsilon}-\frac{1}{2},y\right) = 
    \frac{y}{2} U\left(\frac{s+1}{\epsilon}-\frac{1}{2},y\right)- 
    U\left(\frac{s+1}{\epsilon}-\frac{3}{2},y\right),
\end{equation}
one has
\begin{align}\label{exacthatf+'}
    \hat{f}_{+}^{'}(x) &={} -\frac{\epsilon}{2}x \hat{f}_{+}(x) + 
    A(s)\sqrt{\epsilon}e^{-\frac{\epsilon}{4}x^2}
    U^{'}\left(\frac{s+1}{\epsilon}-\frac{1}{2},\sqrt{\epsilon}x\right) 
    \notag\\
    &={} -A(s)\sqrt{\epsilon}e^{-\frac{\epsilon}{4}x^2}
    U\left(\frac{s+1}{\epsilon}-\frac{3}{2},\sqrt{\epsilon}x\right).
\end{align}

\paragraph{Solve $\hat{f}_{-}$.}
Similar as for $\hat{f}_{+}$, letting $y = -\sqrt{\epsilon}x$, one can find the general solution to \eqref{eq:hatf-} such that 
\begin{equation}\label{hatf-x}
    \hat{f}_{-}(x) = 
    B(s)e^{-\frac{\epsilon}{4}x^2}\hat{H}_{-,1}(-\sqrt{\epsilon}x) = B(s) e^{-\frac{\epsilon}{4}x^2} 
    U\left( \frac{s}{\epsilon}-\frac{1}{2}, -\sqrt{\epsilon} x \right),
\end{equation} 
where 
\begin{equation*}
    \hat{H}_{-,1}(y) = U\left( \frac{s}{\epsilon}-\frac{1}{2}, y \right)= \sqrt{\pi}2^{-\frac{s}{2\epsilon}}
    \left[\frac{h_3(y)}{\Gamma(\frac{s}{2\epsilon}+\frac{1}{2})} 
    - \frac{\sqrt{2}h_4(y)}{\Gamma(\frac{s}{2\epsilon})}\right],
\end{equation*}
with 
\begin{equation*}
        h_3(y) = e^{-\frac{\epsilon}{4}x^2} 
        M\left(\frac{s}{2\epsilon},\frac{1}{2},\frac{y^2}{2}\right), \quad
        h_4(y) = ye^{-\frac{\epsilon}{4}x^2} 
        M\left(\frac{1}{2}+\frac{s}{2\epsilon},
        \frac{3}{2},\frac{y^2}{2}\right).
    \end{equation*}
Using the property of $U$ in \eqref{derivate_U} in Appendix B, one gets the expression for $\hat{f}_{-}^{'}$ 
\begin{equation}\label{hatf'-x}
    \hat{f}_{-}^{'}(x) = B(s)\sqrt{\epsilon} e^{-\frac{\epsilon}{4}x^2} U\left( \frac{s}{\epsilon}-\frac{3}{2}, -\sqrt{\epsilon} x \right).
\end{equation}
\paragraph{Connecting $\hat{f}_{+}$ and $\hat{f}_{-}$.}
$A(s)$, $B(s)$ can be determined by the connection conditions in
\eqref{connectionx=0}.
Letting $x \to 0+$ in \eqref{exacthatf+} and \eqref{exacthatf+'}, the properties of $U$ in \eqref{U_0} and \eqref{derivate_U_0} give
\begin{align*}
    \hat{f}_{+}(0+) &={} \frac{A(s) 2^{-\frac{s+1}{2\epsilon}} \pi^{\frac{1}{2}}}{\Gamma\left(\frac{1}{2}+\frac{s+1}{2\epsilon}\right)},  \\
    \hat{f}_{+}^{'}(0+) &={} -\frac{A(s) 2^{\frac{1}{2}-\frac{s+1}{2\epsilon}}
    \pi^{\frac{1}{2}} \sqrt{\epsilon}}{\Gamma\left(\frac{s+1}{2\epsilon}\right)}.
\end{align*}
Similarly, when $x \to 0-$, by \eqref{U_0} and \eqref{derivate_U_0} again, \eqref{hatf-x} and \eqref{hatf'-x} give
\begin{align*}
    \hat{f}_{-}(0-) &={} \frac{B(s) 2^{-\frac{s}{2\epsilon}} \pi^{\frac{1}{2}}}{\Gamma\left(\frac{1}{2}+\frac{s}{2\epsilon}\right)}, 
    \\
    \hat{f}_{-}^{'}(0-) &={} \frac{B(s) 2^{\frac{1}{2}-\frac{s}{2\epsilon}}
    \pi^{\frac{1}{2}} \sqrt{\epsilon}}{\Gamma\left(\frac{s}{2\epsilon}\right).}
\end{align*}
Substituting the above four expressions into the connection conditions 
\eqref{connectionx=0}, we find 
\begin{equation*}
    A(s) = \frac{\sqrt{\frac{\pi}{\epsilon}}
    2^{\frac{s+1}{2\epsilon}-\frac{1}{2}}}
    {\frac{\pi}{\Gamma(\frac{s+1}{2\epsilon})}
    +\frac{\pi\Gamma(\frac{1}{2}
    +\frac{s}{2\epsilon})}{\Gamma(\frac{s}{2\epsilon})\Gamma(\frac{1}{2}+
    \frac{s+1}{2\epsilon})}}.	
\end{equation*}

In summary, we get the explicit solution $\hat{f}(x,s)$ to \eqref{eq:hatf}-\eqref{connectionx=0}.

\paragraph{The explicit formula for $\hat{n}(s)$.}
Integrating \eqref{eq:hatf+} from $0$ to $+\infty$, one can get that
\begin{equation*}
    \int_0^{+\infty} \hat{f}_{+}^{''} dx + \epsilon \int_0^{+\infty} (x\hat{f}_{+}^{'}) dx
    -(s+1)\int_0^{+\infty} \hat{f}_{+} dx = 0.
\end{equation*}
As a result, from \eqref{eq:N(t)}
\begin{align}\label{hatnexact}
    \hat{n}(s) &={} \int_0^{+\infty} \hat{f}_{+} dx = \frac{1}{s+1} \left[\int_0^{+\infty} \hat{f}_{+}^{''} dx + \epsilon \int_0^{+\infty} (x\hat{f}_{+}^{'}) dx\right] \notag \\
    &={} \frac{1}{s+1} \left(\hat{f}_{+}^{'}|_0^{+\infty} + \epsilon x \hat{f}_{+}|_0^{+\infty}\right).
\end{align}
Therefore to get $\hat{n}(s)$, one needs only $\hat{f}_{+}^{'}(+\infty)$, $\hat{f}_{+}^{'}(0)$, $\lim_{x\to0+}\epsilon x \hat{f}_{+}(x)$ and $\lim_{x\to+\infty}\epsilon x \hat{f}_{+}(x)$.
From \eqref{exacthatf+}, \eqref{exacthatf+'} and the limiting behavior of $U$ in \eqref{U_0} and \eqref{U_infty}, we have 
\begin{equation}\label{hatf+infty}
    \lim_{x\to+\infty} \epsilon x \hat{f}_{+}(x) = 0, \quad
    \lim_{x\to+\infty} \hat{f}_{+}^{'}(x) = 0. 
\end{equation}	
and
\begin{equation}\label{hatf+0f'0}
    \lim_{x \to 0+}\epsilon x \hat{f}_{+}(x) = 0, \quad
    \lim_{x \to 0+}\hat{f}_{+}^{'}(x) = -\frac{A(s) 2^{\frac{1}{2}-\frac{s+1}{2\epsilon}}
    \pi^{\frac{1}{2}} \sqrt{\epsilon}}{\Gamma\left(\frac{s+1}{2\epsilon}\right)}.
\end{equation}
Substituting \eqref{hatf+0f'0}, \eqref{hatf+infty} into \eqref{hatnexact}, we can get 
\begin{align}\label{eq:hatn}
    \hat{n}(s) &={} -\frac{1}{s+1}\hat{f}_{+}^{'}(0+) 
    = \frac{1}{s+1} \frac{A(s) 2^{\frac{1}{2}-\frac{s+1}{2\epsilon}}
    \pi^{\frac{1}{2}} \sqrt{\epsilon}}{\Gamma\left(\frac{s+1}{2\epsilon}\right)} \notag \\
    &={} \frac{1}{(s+1)[1+\digamma(s;\epsilon)]}.
\end{align}
where 
\begin{equation}\label{digammas}
    \digamma(s;\epsilon) := \frac{\Gamma\left(\frac{s+1}{2\epsilon}\right)
    \Gamma\left(\frac{1}{2}+\frac{s}{2\epsilon}\right)}
    {\Gamma\left(\frac{s}{2\epsilon}\right)
    \Gamma\left(\frac{1}{2}+\frac{s+1}{2\epsilon}\right)}.
\end{equation}

\subsection{The asymptotic approximation when $s\sim O(1)$}
Considering the intermediate regime $s\sim O(1)$, we need to find the leading order approximation of $\hat{n}(s)$ in \eqref{eq:hatn} when $\epsilon\to 0$.
As $z \to +\infty$, Stirling's formula gives
\begin{equation*}
    \Gamma(z) \approx 
    \sqrt{\frac{2\pi}{z}}
    \left(\frac{z}{e}\right)^z\left[1+O(\frac{1}{z})\right], \quad Re(z)>0.
\end{equation*}
When $\epsilon \ll 1$, $s=O(1)$, one has the following approximations to the Gamma function terms in 
\eqref{digammas} such that
\begin{subequations}\label{Gammaapproxpowerlaw}
    \begin{equation}
        \Gamma\left(\frac{s+1}{2\epsilon}\right) = e^{-\frac{s+1}{2\epsilon}}
    \sqrt{\frac{4\pi\epsilon}{s+1}} 
    \left(\frac{s+1}{2\epsilon}\right)^{\frac{s+1}{2\epsilon}} 
    \left[1+O(\frac{2\epsilon}{s+1})\right],
    \end{equation}
    \begin{equation}
        \Gamma\left(\frac{1}{2}+\frac{s}{2\epsilon}\right) = 
    e^{-\frac{s+\epsilon}{2\epsilon}}
    \sqrt{\frac{4\pi \epsilon}{s+\epsilon}}
    \left(\frac{s+\epsilon}{2\epsilon}\right)^{\frac{s+\epsilon}{2\epsilon}}
     \left[1+O(\frac{2\epsilon}{s+\epsilon})\right],
    \end{equation}
    \begin{equation}
        \Gamma\left(\frac{s}{2\epsilon}\right) = e^{-\frac{s}{2\epsilon}}
    \sqrt{\frac{4\pi 
    \epsilon}{s}}\left(\frac{s}{2\epsilon}\right)^{\frac{s}{2\epsilon}}
    \left[1+O(\frac{2\epsilon}{s})\right],
    \end{equation}
    \begin{equation}
        \Gamma\left(\frac{1}{2}+\frac{s+1}{2\epsilon}\right) = 
    e^{-\frac{s+1+\epsilon}{2\epsilon}}
    \sqrt{\frac{4\pi \epsilon}{s+1+\epsilon}} 
    \left(\frac{s+1+\epsilon}{2\epsilon}\right)^{\frac{s+1+\epsilon}
    {2\epsilon}}\left[1+O(\frac{2\epsilon}{s+1+\epsilon})\right].
    \end{equation}
\end{subequations}

Thus
\begin{subequations}
    \begin{equation}\label{Gamma1}
        \Gamma\left(\frac{s+1}{2\epsilon}\right)
    \Gamma\left(\frac{1}{2}+\frac{s}{2\epsilon}\right)
    =4\pi \epsilon (s+1)^{\frac{s+1}{2\epsilon}-\frac{1}{2}} 
    (s+\epsilon)^{\frac{s}{2\epsilon}}
    \left(\frac{e}{2\epsilon}\right)^{\frac{2s+1+\epsilon}{2\epsilon}}
    \left[1+O(\epsilon)\right],
    \end{equation} 
    and
    \begin{equation}\label{Gamma2}
        \Gamma\left(\frac{s}{2\epsilon}\right)
    \Gamma\left(\frac{1}{2}+\frac{s+1}{2\epsilon}\right) = 
    4\pi \epsilon s^{\frac{s}{2\epsilon}-\frac{1}{2}} 
    (s+1+\epsilon)^{\frac{s+1}{2\epsilon}} 
    \left(\frac{e}{2\epsilon}\right)^{\frac{2s+1+\epsilon}{2\epsilon}}
    \left[1+O(\epsilon)\right].
    \end{equation}
\end{subequations}

From \eqref{Gamma1} and \eqref{Gamma2}, one has
\begin{align}\label{digamma1}
    \digamma(s;\epsilon) &={}  
    \frac{4\pi \epsilon (s+\epsilon)^{\frac{s}{2\epsilon}} 
    (s+1)^{\frac{s+1}{2\epsilon}-\frac{1}{2}} 
    (\frac{e}{2\epsilon})^{\frac{2s+1+\epsilon}{2\epsilon}}
    [1+O(\epsilon)]}{4\pi \epsilon 
    s^{\frac{s}{2\epsilon}-\frac{1}{2}} 
    (s+1+\epsilon)^{\frac{s+1}{2\epsilon}} 
    (\frac{e}{2\epsilon})^{\frac{2s+1+\epsilon}{2\epsilon}}
    [1+O(\epsilon)]} \notag\\         
    &={} \frac{\sqrt{s}}{\sqrt{s+1}} 
    \frac{(1+\frac{\epsilon}{s})^{\frac{s}{2\epsilon}}}
    {(1+\frac{\epsilon}{s+1})^{\frac{s+1}{2\epsilon}}} 
    [1+O(\epsilon)]
    = \frac{\sqrt{s}}{\sqrt{s+1}} \frac{e^{\frac{s}{2\epsilon}\log(1+\frac{\epsilon}{s})}}
    {e^{\frac{s+1}{2\epsilon}\log(1+\frac{\epsilon}{s+1})}} [1+O(\epsilon)] \notag \\
    &={} \frac{\sqrt{s}}{\sqrt{s+1}} \frac{e^{\frac{1}{2}+\frac{\epsilon}{4s}+O(\epsilon^2)}}
    {e^{\frac{1}{2}+\frac{\epsilon}{4s+4}+O(\epsilon^2)}} [1+O(\epsilon)] 
    = \frac{\sqrt{s}}{\sqrt{s+1}} e^{\frac{\epsilon}{4(s+1)}+O(\epsilon^2)} [1+O(\epsilon)] \notag \\
    &={} \frac{\sqrt{s}}{\sqrt{s+1}} [1+\frac{\epsilon}{4(s+1)}+O(\epsilon^2)] [1+O(\epsilon)] = 
    \frac{\sqrt{s}}{\sqrt{s+1}} + O(\epsilon).
\end{align}

Therefore for $\hat{n}(s)$, we have the following approximation 
\begin{align*}
    \hat{n}(s) &={} 
    \frac{1}{(s+1)\left[1+\frac{\sqrt{s}}{\sqrt{s+1}}+O(\epsilon)\right]} 
    = \frac{1}{(s+1)\left[1+\frac{\sqrt{s}}{\sqrt{s+1}}\right]} 
    \frac{1}{1+O(\epsilon)} \\
    &={} \left(1-\sqrt{\frac{s}{s+1}}\right)(1+O(\epsilon)).
\end{align*}
 Define
\begin{equation*}
    \hat{n}_0(s) := 1-\sqrt{\frac{s}{s+1}}.
\end{equation*}
$\hat{n}_0(s)$ is the leading order term of $\hat{n}(s)$. Taking inverse Laplace 
transform yields
\begin{equation}\label{n0tpowerlaw}
    n_0(t) = \mathscr{L}^{-1}[\hat{n}_0(s)] = 
    e^{-\frac{t}{2}}[I_0(\frac{t}{2})-I_1(\frac{t}{2})] \sim 
    C_{p}t^{-\frac{3}{2}},
\end{equation}
where $I_0(t)$ and $I_1(t)$ are the first modified Bessel 
functions\cite{NISTDLMF} and $C_p$ is a constant.

When $s$ is at $O(\epsilon)$, the two terms $\Gamma(\frac{s}{2\epsilon})$ 
and $\Gamma(\frac{s}{2\epsilon}+\frac{1}{2})$ can no longer be approximated as in \eqref{Gammaapproxpowerlaw}. As a result, the power law decay in \eqref{n0tpowerlaw} is no longer valid when $t$ is larger than $O(\frac{1}{\epsilon})$.

\subsection{The asymptotic approximation when $s=O(\epsilon^2)$}
We then consider $s=O(\epsilon^2)$ and let $\tilde{s}=\epsilon^{-2}s$. Then $\tilde{s}=O(1)$ and $\hat{n}(s)$ becomes
\begin{equation}\label{expdecayhatn}
    \hat{n}(\tilde{s})= \frac{1}{\epsilon^2 \tilde{s}+1} 
    \frac{1}{1+\frac{\Gamma(\frac{\epsilon^2 \tilde{s}+1}{2\epsilon})
    \Gamma(\frac{1}{2}+\frac{\tilde{s}}{2}\epsilon)}
    {\Gamma(\frac{\tilde{s}}{2}\epsilon)
    \Gamma(\frac{1}{2}+\frac{\epsilon^2\tilde{s}+1}{2\epsilon})}} = 
    \frac{1}{(\epsilon^2 \tilde{s}+1)(1+\tilde{\digamma}(\tilde{s};\epsilon))}.
\end{equation}
where 
\begin{equation*}
    \tilde{\digamma}(\tilde{s};\epsilon):=\frac{\Gamma(\frac{\epsilon^2 \tilde{s}+1}{2\epsilon})
            \Gamma(\frac{1}{2}+\frac{\tilde{s}}{2}\epsilon)}
            {\Gamma(\frac{\tilde{s}}{2}\epsilon)
            \Gamma(\frac{1}{2}+\frac{\epsilon^2\tilde{s}+1}{2\epsilon})}.
\end{equation*}
When $\epsilon \to 0^+$, both $\frac{\epsilon^2 \tilde{s}+1}{2\epsilon}$ and 
$\frac{1}{2}+\frac{\epsilon^2\tilde{s}+1}{2\epsilon}$ will tend to 
$+\infty$. By Stirling's formula, one has
\begin{align*}
    \frac{\Gamma\left(\frac{\epsilon^2 \tilde{s}+1}{2\epsilon}\right)}
    {\Gamma\left(\frac{1}{2}+\frac{\epsilon^2\tilde{s}+1}{2\epsilon}\right)
    }
    &={} e^{\frac{1}{2}}(1+\frac{\epsilon}{\epsilon^2 
    \tilde{s}+1})^{-\frac{\epsilon^2 \tilde{s}+1}{2\epsilon}}
    \frac{\sqrt{\epsilon^2 \tilde{s}+1+\epsilon}}
    {\sqrt{\epsilon^2 \tilde{s}+1}}
     \left[1+O(\epsilon)\right]	\\
    &={} e^{\frac{\epsilon}{4(\epsilon^2 \tilde{s}+1)}+O(\epsilon^2)} 
    \frac{\sqrt{\epsilon^2 \tilde{s}+1+\epsilon}}{\sqrt{\epsilon^2 
    \tilde{s}+1}} \left[1+O(\epsilon)\right] \\
    &={} \frac{\sqrt{\epsilon^2 \tilde{s}+1+\epsilon}}{\sqrt{\epsilon^2 
    \tilde{s}+1}} \left[1+O(\epsilon)\right].
\end{align*}
On the other hand, by Euler's reflection formula, for a real and non integer $z$,
\begin{equation}\label{Eulerreflection}
    \Gamma(z)\Gamma(1-z) = \frac{\pi}{\sin{\pi z}}, \quad z \notin 
    \mathbb{Z}.
\end{equation}
which indicates that $\frac{1}{\Gamma(\frac{\tilde{s}}{2}\epsilon)}=\frac{\sin(\frac{\tilde{s}}{2}\epsilon \pi)
\Gamma(1-\frac{\tilde{s}}{2}\epsilon)}{\pi}$. Hence,
\begin{align*}
    \frac{\Gamma(\frac{1}{2}+\frac{\tilde{s}}{2}\epsilon)}
    {\Gamma(\frac{\tilde{s}}{2}\epsilon)} &={} 
    \pi^{-1} \Gamma(\frac{1}{2}+\frac{\tilde{s}}{2}\epsilon)
    \Gamma(1-\frac{\tilde{s}}{2}\epsilon) \sin(\frac{\tilde{s}}{2}\epsilon 
    \pi) \\
    &={} \pi^{-1} \Gamma(\frac{1}{2}) 
    \Gamma(1)[1+O(\epsilon)][\frac{\tilde{s}}{2}\epsilon \pi+O(\epsilon^2)] \\
    &={} \frac{\sqrt{\pi}}{2}\tilde{s}\epsilon + O(\epsilon^2),
\end{align*}
where we have used the Taylor expansions of $\Gamma(z)$ and $\sin(z)$.

Therefore, $\tilde{\digamma}(\tilde{s};\epsilon)$ can be approximated by
\begin{align*}
    \tilde{\digamma}(\tilde{s};\epsilon) &= \left\{\frac{\sqrt{\epsilon^2 
    \tilde{s}+1+\epsilon}}{\sqrt{\epsilon^2 \tilde{s}+1}} 
    \left[1+O(\epsilon)\right]\right\}\left[\frac{\sqrt{\pi}}{2}\tilde{s}\epsilon
     + 	
    O(\epsilon^2)\right] \\
    &= \epsilon \frac{\sqrt{\pi}}{2} \tilde{s} 
    \sqrt{1+\frac{\epsilon}{\epsilon^2 \tilde{s}+1}}+O(\epsilon^2).
\end{align*}
and $\hat{n}(\tilde{s})$ becomes
\begin{equation}
    \hat{n}(\tilde{s}) = \frac{1}{\epsilon^2\tilde{s}+1} \times
    \frac{1}{1+\epsilon \frac{\sqrt{\pi}}{2} \tilde{s} 
    \sqrt{1+\frac{\epsilon}{\epsilon^2 \tilde{s}+1}}+O(\epsilon^2)} 
    =\frac{1}{\epsilon^2\tilde{s}+1}\left(1+O(\epsilon)\right)
\end{equation}
Let
\begin{equation*}
    \hat{n}_0(\tilde{s}) := \frac{1}{\epsilon^2\tilde{s}+1},
\end{equation*}
\begin{equation}
    n_0(\tilde{t})= 
    \mathscr{L}^{-1}[\hat{n}_0(\tilde{s})] = 
    e^{-\frac{\tilde{t}}{\epsilon^2}}.
\end{equation}
 So that when $t \sim \frac{1}{\epsilon^2}$, $n_0(t)$ exhibits exponential decay.

Based on the formal calculations, we can infer that the waiting time PDF $n(t)$ behave power law decay in the middle and exponential decay when $t$ is large enough.

\section{The proof of Theorem 2.1}\label{4}
 In this section, our main 
results of the waiting time $T$ will be rigorously proved with the probability tool. First, we will rewrite Theorem 2.1 into two propositions which will be proved separately in the following subsections. 
%Then we prove the two propositions respectively.
\subsection{Exponential decay in the macroscopic time scale}
%in the end 

\begin{prop}\label{thm31}
    For $T$ defined in \eqref{eq:1passagetime}, there is a constant $c > 0$ such that
    \begin{equation}\label{eq:p1}
        P\left(T>t\right)\leq \exp(-ct),
    \end{equation}
    when $t \to \infty$ .
\end{prop}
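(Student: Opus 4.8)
The plan is to rewrite the claim as an exponential bound on the killed semigroup of \eqref{1} and to establish it via a geometric drift inequality. Since $\Gamma\sim\exp(1)$ is independent of $X$, conditioning on $\mathcal F_t$ as in Section~\ref{model} gives
\begin{equation*}
    \mathbb P(T>t)=\mathbb E\!\left[e^{-\int_0^t\Lambda(X_s)\,ds}\right]=Q_t\mathbf 1(0),\qquad Q_tg(x):=\mathbb E_x\!\left[g(X_t)\,e^{-\int_0^t\Lambda(X_s)\,ds}\right],
\end{equation*}
where $Q_t$ is the sub‑Markovian semigroup of the OU process killed at rate $\Lambda$. I would fix a weight $V(x)=e^{ax^2}$ with $a>0$ small (any $V$ with $\log V(x)=o(x^2)$, e.g.\ $V(x)=\cosh(\beta x)$, works just as well) and look for a time step $\ell_0\ge1$ and a constant $\lambda\in(0,1)$ with
\begin{equation}\label{eq:driftineq}
    Q_{\ell_0}V(x)\le\lambda\,V(x)\qquad\text{for every }x\in\mathbb R .
\end{equation}
Granting \eqref{eq:driftineq}, iteration gives $Q_{n\ell_0}V\le\lambda^nV$, hence $\mathbb P(T>n\ell_0)=Q_{n\ell_0}\mathbf 1(0)\le Q_{n\ell_0}V(0)=\lambda^nV(0)=\lambda^n$; as $t\mapsto\mathbb P(T>t)$ is non‑increasing this yields $\mathbb P(T>t)\le\lambda^{-1}e^{-ct}$ with $c=-\ell_0^{-1}\log\lambda>0$, which is the assertion.

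To obtain \eqref{eq:driftineq} I would treat a tail region $\{|x|>R\}$ and a compact core $\{|x|\le R\}$ separately. In the tail region the killing plays no role: bounding $e^{-\int_0^{\ell_0}\Lambda}\le1$ and using that $X_{\ell_0}$ given $X_0=x$ is Gaussian with mean $xe^{-\epsilon\ell_0}$ and variance $\sigma_{\ell_0}^2=\epsilon^{-1}(1-e^{-2\epsilon\ell_0})<\epsilon^{-1}$, an elementary Gaussian moment computation gives $Q_{\ell_0}V(x)\le\mathbb E_x[e^{aX_{\ell_0}^2}]\le C\,e^{a'x^2}$ with a finite constant $C$ and some $a'\in(0,a)$, \emph{provided} $2a\sigma_{\ell_0}^2<1$; this is where the size of $a$ relative to the stationary variance $1/\epsilon$ enters, and choosing $a<\epsilon/8$ makes all relevant Gaussian integrals converge with constants uniform in $\ell_0\ge1$. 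Hence $Q_{\ell_0}V(x)\le Ce^{(a'-a)x^2}V(x)\le\tfrac12V(x)$ once $|x|\ge R$, for a threshold $R=R(a,\epsilon)$ that can be fixed before $\ell_0$. On the core the OU drift gives no contraction, because $\Lambda$ vanishes on $(-R,0)$, so the decay there must come from the rate accumulated \emph{after} the path re‑enters $[0,\infty)$. Using $\int_0^{\ell_0}\Lambda(X_s)\,ds\ge C_{+}L_{\ell_0}$ with $L_t:=\int_0^t\mathbf 1_{\{X_s\ge0\}}\,ds$ together with Cauchy--Schwarz,
\begin{equation*}
    Q_{\ell_0}V(x)\le\big(\mathbb E_x[V(X_{\ell_0})^2]\big)^{1/2}\big(\mathbb E_x[e^{-2C_{+}L_{\ell_0}}]\big)^{1/2},
\end{equation*}
where the first factor is again a Gaussian moment, bounded uniformly on $\{|x|\le R\}$, and the second tends to $0$ as $\ell_0\to\infty$ uniformly on $\{|x|\le R\}$. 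For the latter I would use that $L_t\uparrow\infty$ $\mathbb P_x$‑a.s.\ for every $x$ (the invariant law $N(0,1/\epsilon)$ of \eqref{1} assigns $[0,\infty)$ mass $\tfrac12$, so $L_t/t\to\tfrac12$ by the ergodic theorem), that $x\mapsto\mathbb E_x[e^{-2C_{+}L_t}]$ is continuous (the OU path depends continuously on its starting point and spends zero Lebesgue time at $0$) and non‑increasing in $t$, so Dini's theorem upgrades pointwise convergence to $0$ into uniform convergence on the compact set $\{|x|\le R\}$. Taking $\ell_0$ large then forces the product to be at most $\tfrac34$, hence at most $\tfrac34V(x)$ since $V\ge1$, and \eqref{eq:driftineq} holds with $\lambda=\tfrac34$.

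The main obstacle is the compact‑core estimate: on the negative half‑line near the origin the Poisson clock never advances, so no one‑step argument based on the OU drift can produce contraction there, and one must instead quantify how the recurrence of the diffusion repeatedly drives the path back into the region $\{\Lambda\ge C_{+}\}=[0,\infty)$, so that the integrated rate — equivalently the occupation time $L_t$ of $[0,\infty)$ — grows linearly. This is precisely the ``gambler's ruin'' mechanism noted after Theorem~\ref{mainthm1}: each sojourn of $X$ in $[0,\infty)$ is an essentially independent chance to trigger $T$, and the exponential tail is the probability of failing in all of the order‑$t$ such sojourns before time $t$. One could alternatively organize the whole proof around this renewal picture — decomposing the path into excursions, showing each substantial excursion into $[0,\infty)$ accumulates $\int\Lambda\ge\delta>0$ with probability bounded below, and bounding $\mathbb P(T>t)\le\mathbb E[e^{-\delta N_t}]$ for the counting process $N_t$ of such excursions — but the Lyapunov route above has the advantage of not requiring the excursion decomposition to be made explicit.
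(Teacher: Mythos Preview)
Your Lyapunov--function argument is correct and complete as sketched. The key steps --- Gaussian contraction of the OU transition in the tail, Cauchy--Schwarz plus ergodicity and Dini on the compact core, and iteration of the resulting one-step drift inequality $Q_{\ell_0}V\le\lambda V$ --- all go through, and the care you take with the constraint $a<\epsilon/4$ (so that both $\mathbb E_x[V(X_{\ell_0})]$ and $\mathbb E_x[V(X_{\ell_0})^2]$ are finite Gaussian integrals, with constants uniform in $\ell_0\ge 1$) is exactly what is needed to fix $R$ before choosing $\ell_0$.

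The paper, however, does \emph{not} take this semigroup route: it implements precisely the renewal/excursion alternative you outline in your final paragraph. Concretely, it defines explicit stopping times $\Gamma_n^1\le\Gamma_n^2\le\Gamma_n^3$ marking successive visits of $X_t$ to $\{\pm1\}$ and $\{\pm2\}$, introduces Bernoulli indicators $Id(n)$ for the event ``the $n$th cycle spends at least unit time in $[0,2]$'', shows via hitting-time estimates (Lemmas~\ref{lem34}--\ref{lem36}) that the cycle lengths $\Gamma_n^3-\Gamma_{n-1}^3$ are independent and uniformly exponentially integrable while $\mathbb P(Id(n)=1)\ge P_0>0$, and then dominates $T$ by a geometric sum $\Gamma^3_{N([\Gamma/C_+]+1)}$ of such cycles. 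The exponential integrability of $T$ follows from an elementary lemma (Corollary~\ref{cor31}) on geometric sums of uniformly exponentially integrable independent summands.

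In short: your approach is functional-analytic and relies on soft tools (the ergodic theorem, Dini), packaging everything into a single drift inequality; the paper's is a bare-hands probabilistic decomposition that makes the gambler's-ruin mechanism fully explicit and, in principle, yields more trackable constants (the rate $c$ there is ultimately governed by the $\epsilon^2/32$ of Lemma~\ref{lem34}). Both are valid; it is worth noting that the approach you relegate to a parenthetical remark is the one the authors actually chose.
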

In order to show \eqref{eq:p1}, it is equivalent to prove that $T$ is \textbf{exponentially integrable}, i.e. $\exists \ c>0$, such that $\mathbb{E}\left\lbrack \exp(c T)\right\rbrack < \infty$.
    Then \eqref{eq:p1} is an immediate result of Markov inequality as
    \begin{equation}
        \mathbb{P}\left(T>t \right)= \mathbb{P}\left(\exp(c T)>\exp(ct)\right) 
        \leq \dfrac{\mathbb{E}\left[\exp(cT)\right]}{\exp(ct)} \notag.
    \end{equation}
So in the rest of this subsection, we will concentrate on proving $T$ is exponentially integrable.

Now we present preliminary results that can be useful for our subsequent discussions.

\begin{lem}\label{lem31}
    For independent r.v.'s $X$ and $Y$, suppose they are both exponentially integrable. So does the sum $Z=X+Y$.
\end{lem}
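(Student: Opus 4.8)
The plan is to reduce the exponential integrability of $Z=X+Y$ to a factorization of moment generating functions made possible by independence. By hypothesis there exist constants $c_X>0$ and $c_Y>0$ with $\mathbb{E}[\exp(c_X X)]<\infty$ and $\mathbb{E}[\exp(c_Y Y)]<\infty$; I would set $c:=\min\{c_X,c_Y\}>0$ and aim to show $\mathbb{E}[\exp(cZ)]<\infty$.

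The first step is to observe that exponential integrability at one positive rate propagates to every smaller positive rate, so that $\mathbb{E}[\exp(cX)]<\infty$ and $\mathbb{E}[\exp(cY)]<\infty$. This follows from the pointwise inequality $\exp(cx)\le 1+\exp(c_X x)$, valid for all $x\in\mathbb{R}$ whenever $0<c\le c_X$ (indeed $\exp(cx)\le 1$ for $x\le 0$ and $\exp(cx)\le\exp(c_X x)$ for $x\ge 0$), whence $\mathbb{E}[\exp(cX)]\le 1+\mathbb{E}[\exp(c_X X)]<\infty$, and symmetrically for $Y$. In the application of interest $X$ and $Y$ are nonnegative, so this remark is only a precaution.

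The second step invokes independence: since $\exp(cX)$ and $\exp(cY)$ are nonnegative measurable functions of the independent random variables $X$ and $Y$, they are themselves independent, and therefore
\[
\mathbb{E}[\exp(cZ)]=\mathbb{E}\big[\exp(cX)\exp(cY)\big]=\mathbb{E}[\exp(cX)]\,\mathbb{E}[\exp(cY)]<\infty,
\]
which is exactly exponential integrability of $Z$ at rate $c$, and hence (via the Markov inequality argument already recorded in the text) gives the desired tail bound.

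There is no substantive obstacle in this lemma; the only point needing a word of care is the selection of the common rate $c$ together with the elementary monotonicity of exponential integrability in the rate, both disposed of above. I record the lemma separately because it will be applied repeatedly to pass from exponential integrability of individual excursion durations to that of finite sums of them in the proof of Proposition \ref{thm31}.
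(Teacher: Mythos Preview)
Your proof is correct and follows the same approach as the paper, namely factorizing $\mathbb{E}[\exp(cZ)]=\mathbb{E}[\exp(cX)]\mathbb{E}[\exp(cY)]$ via independence. You are in fact slightly more careful than the paper, which takes a common rate $c$ for granted; your explicit choice $c=\min\{c_X,c_Y\}$ and the monotonicity remark fill a small gap the paper leaves implicit.
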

\begin{proof}
    This simply follows from the fact that
    \begin{equation}
        \mathbb{E}\left[\exp(c Z)\right]= \mathbb{E}\left[\exp(c 
        X)exp(c Y)\right] = \mathbb{E}\left[\exp(c 
        X)\right]\mathbb{E}\left[\exp(c Y)\right],
    \end{equation}
    $\exists \ c > 0$ such that $\mathbb{E}\left\lbrack \exp(c Z)\right\rbrack < \infty$ . Since $X$ and $Y$ are independent of each other.
    \newline
\end{proof}

\begin{rmk}
    Lemma \ref{lem31} clearly holds true for all fixed finite summations.
\end{rmk}

\begin{lem}\label{lem32}
    Let $\{X_n\},n=1,2,\dots $, $X_n \geq 0$, be an independent and identically distributed(i.i.d.) sequence of exponentially integrable random variables, $N \sim G(p)$ is independent to 
    $\{X_n\}_{n=1}^{\infty}$, then we have
    \begin{equation*}
        Y=\sum_{n=1}^{N} X_n 
    \end{equation*}
    to be exponentially integrable. Here $G(p)$ is a geometric distribution with parameter $p$.
\end{lem}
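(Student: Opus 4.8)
The plan is to compute the moment generating function of $Y = \sum_{n=1}^N X_n$ directly using the independence of $N$ from the sequence $\{X_n\}$, and to exploit the fact that a geometric random variable puts exponentially decaying mass on each value $n$. Since $Y$ is a nonnegative random variable, exponential integrability amounts to finding some $c > 0$ with $\mathbb{E}[\exp(cY)] < \infty$. Conditioning on $N = n$ and using the i.i.d. assumption, $\mathbb{E}[\exp(cY) \mid N = n] = \left(\mathbb{E}[\exp(cX_1)]\right)^n =: \varphi(c)^n$, where by hypothesis $\varphi(c) < \infty$ for all sufficiently small $c > 0$, and moreover $\varphi(c) \to 1$ as $c \to 0^+$ by monotone/dominated convergence (since $X_1 \geq 0$ and $\varphi(0) = 1$).

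The key steps in order: first I would write
\begin{equation*}
    \mathbb{E}\left[\exp(cY)\right] = \sum_{n=1}^{\infty} \mathbb{P}(N = n)\, \varphi(c)^n = \sum_{n=1}^{\infty} p(1-p)^{n-1} \varphi(c)^n,
\end{equation*}
using the law of total expectation together with the independence of $N$ and $\{X_n\}$. Second, I would recognize this as a geometric series in the variable $(1-p)\varphi(c)$, which converges precisely when $(1-p)\varphi(c) < 1$, and in that case sums to $\dfrac{p\,\varphi(c)}{1 - (1-p)\varphi(c)} < \infty$. Third, I would observe that since $\varphi(c) \to 1$ as $c \to 0^+$ and $1 - p < 1$, there exists $c_0 > 0$ small enough that $(1-p)\varphi(c_0) < 1$ (and $\varphi(c_0) < \infty$); fixing such a $c_0$ gives $\mathbb{E}[\exp(c_0 Y)] < \infty$, which is exactly exponential integrability of $Y$.

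The main obstacle — really the only non-routine point — is justifying that $\varphi(c) = \mathbb{E}[\exp(cX_1)]$ can be made arbitrarily close to $1$ by taking $c$ small; this follows because $X_1$ is exponentially integrable so $\varphi(c) < \infty$ for $c$ in some interval $(0, c_1)$, and then $\exp(cX_1) \downarrow 1$ pointwise as $c \downarrow 0$ with $\exp(cX_1) \leq \exp(c_1 X_1) \in L^1$ for $c \leq c_1$, so dominated convergence yields $\varphi(c) \to 1$. A secondary point to handle carefully is the interchange of summation and expectation in the first display, which is legitimate by Tonelli's theorem since all terms are nonnegative. No measure-theoretic subtlety beyond this arises, and the argument parallels Lemma~\ref{lem31} in spirit, replacing a finite product by a convergent geometric series.
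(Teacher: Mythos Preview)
Your proposal is correct and follows essentially the same approach as the paper: condition on $N$, use i.i.d.\ to get a geometric series in $(1-p)\varphi(c)$, and invoke dominated convergence to push $\varphi(c)$ close enough to $1$ that the series converges. The paper chooses the explicit threshold $\varphi(\lambda_0)\le\frac{1}{2}\bigl(1+\frac{1}{1-p}\bigr)$ and bounds the sum termwise rather than summing the geometric series in closed form, but the logic is identical.
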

\begin{proof}
    Note that $\exists \ c>0$, s.t. $\mathbb{E}\left[\exp(cX_1)\right] < \infty$. Then by dominated convergence theorem,
    \begin{equation*}
        \lim_{\lambda \rightarrow 0^+}\mathbb{E}\left[\exp(\lambda 
        X_1)\right]=1,
    \end{equation*}
    and thus there exists a $\lambda_0 > 0$ such that $\mathbb{E}\left[\exp(\lambda_0 X_1)\right] \in \left[1,(1+\frac{1}{1-p})/2 \right]$.
    
    Thus
    \begin{align*} 
        \mathbb{E}\left[\exp(\lambda_0 Y)\right]
        &={} \sum_{n=1}^{\infty} \left(\mathbb{E}\left[\exp(\lambda_0 
        X_1)\right]\right)^n \left(1-p\right)^{n-1} p, \\
        &\leq{} \sum_{n=1}^{\infty} \frac{(1+\frac{1}{1-p})^n}{2^n} 
        \left(1-p\right)^{n-1} p, \\
        &={}  \sum_{n=1}^{\infty} p(1-\frac{p}{2})^n < \infty. 
    \end{align*}
    So that $Y$ is exponentially integrable.
\end{proof}

We say a family of r.v.'s $\{X_i\}_{i \in I}$ are \textbf{uniformly exponentially integrable} if $\exists \ c >0,\ c_0 < \infty $,
\begin{equation*} 
    \mathbb{E}\left[\exp(cX_i)\right] \leq c_0, \quad \forall i \in I.
\end{equation*}
The following lemma shows that uniformly exponentially integrable leads to
\begin{equation*}
    \lim_{\lambda \to 0^{+}}\mathbb{E}\left[\exp(\lambda X_i) \right] = 1, 
\end{equation*}
uniformly for $i$. 

\begin{lem}\label{lem33}
    For r.v. $X \geq 0$, $c_0 = \mathbb{E}\left[c X\right] < \infty$ for some $c>0$, and any $\eta >0$, $\exists \ \delta = \delta(\eta,c,c_0)$,
    \begin{equation*} 
        \mathbb{E}\left[\exp(\delta X)\right] < 1+ \eta.
    \end{equation*}
\end{lem}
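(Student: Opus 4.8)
The plan is to convert the qualitative fact $\lim_{\lambda\to0^+}\mathbb{E}[\exp(\lambda X)]=1$ into an explicit choice of $\delta$ by interpolating between the exponents $0$ and $c$ and using the a priori bound $c_0$. (Here the hypothesis is read in the sense used throughout this subsection, namely $c_0=\mathbb{E}[\exp(cX)]<\infty$, i.e.\ that $X$ is exponentially integrable.) First observe that, since $X\ge0$ and $c>0$, we have $\exp(cX)\ge1$ and hence $c_0\ge1$; if $c_0=1$ then $X=0$ almost surely and any $\delta>0$ works, so from now on assume $c_0>1$.

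Next, for $0<\delta<c$ set $\theta=\delta/c\in(0,1)$ and write $\exp(\delta X)=\big(\exp(cX)\big)^{\theta}$. Since $t\mapsto t^{\theta}$ is concave on $[0,\infty)$, Jensen's inequality gives
\[
  \mathbb{E}[\exp(\delta X)]\;\le\;\big(\mathbb{E}[\exp(cX)]\big)^{\theta}\;=\;c_0^{\theta}.
\]
It then suffices to pick $\theta$ small enough that $c_0^{\theta}<1+\eta$, i.e.\ $\theta<\log(1+\eta)/\log c_0$; for instance
\[
  \delta=\delta(\eta,c,c_0):=c\cdot\min\Big\{\tfrac12,\ \frac{\log(1+\eta)}{2\log c_0}\Big\}
\]
does the job, and by construction it depends only on $\eta$, $c$, $c_0$, as claimed.

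There is essentially no obstacle in this argument; the only points that need (routine) care are the bookkeeping of the dependence $\delta=\delta(\eta,c,c_0)$ and the degenerate case $c_0=1$, both handled above. If one prefers to avoid Jensen, the same conclusion follows by splitting $\mathbb{E}[\exp(\delta X)]=\mathbb{E}[\exp(\delta X)\mathbf{1}_{\{X\le K\}}]+\mathbb{E}[\exp(\delta X)\mathbf{1}_{\{X>K\}}]$: on the tail, for $\delta<c$ one has $\exp(\delta X)\mathbf{1}_{\{X>K\}}\le \exp(-(c-\delta)K)\exp(cX)$, whose expectation is at most $c_0\exp(-(c-\delta)K)<\eta/2$ once $K$ is large, while on the truncated part $\exp(\delta X)\mathbf{1}_{\{X\le K\}}\le\exp(\delta K)<1+\eta/2$ once $\delta$ is small; choosing $K$ first and then $\delta$ again yields $\delta$ depending only on $\eta,c,c_0$. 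This lemma will then feed back into the earlier arguments to upgrade the various exponential-integrability estimates to uniform ones.
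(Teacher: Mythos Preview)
Your proof is correct. The Jensen argument is a genuinely different (and cleaner) route than the paper's: the paper instead uses the truncation-plus-tail strategy you sketch at the end. Specifically, it first applies Markov's inequality to get $\mathbb{P}(X>t)\le c_0\exp(-ct)$, then bounds $\mathbb{E}[\exp(\delta X)\mathbf{1}_{\{X\ge M\}}]$ by summing $\exp(\delta(k+1))\mathbb{P}(X\ge k)$ over integers $k\ge M$ with $\delta<c/2$, chooses $M$ to make this tail at most $\eta/2$, and finally takes $\delta$ small enough that $\exp(\delta M)<1+\eta/2$. Your Jensen route bypasses all of this discretization and yields an explicit formula for $\delta$ in one line, at the cost of invoking concavity of $t\mapsto t^\theta$; the paper's approach is more elementary but longer and only gives $\delta$ implicitly. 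Your alternative splitting argument is essentially the paper's proof, stated more efficiently via the pointwise bound $\exp(\delta X)\mathbf{1}_{\{X>K\}}\le \exp(-(c-\delta)K)\exp(cX)$ rather than a dyadic sum.
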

\begin{proof}
    Recalling $\mathbb{E}\left[(\exp(cX))\right] = c_0 < \infty $, then by Markov inequality one can get
    \begin{equation*}
        \mathbb{P}\left(X>t\right) \leq c_0\exp(-ct), \quad \forall t >0 .
    \end{equation*}
    Then for all integer $M > 0$, and $\delta < c/2$,
    \begin{align}
        \mathbb{E}\left[\exp(\delta X)\mathbf{1}_{\{X \geq M\}}\right] 
        &\leq{}  \sum_{k=M}^{\infty}\exp(\delta \cdot 
        (k+1))\mathbb{P}\left(k\leq X \leq k+1 \right) \notag ,\\
        &\leq{}  \sum_{k=M}^{\infty}\exp(\delta \cdot 
        (k+1))\mathbb{P}\left(X \geq k \right) \notag ,\\
        &\leq{} \sum_{k=M}^{\infty}\exp(\frac{c}{2}k+\frac{c}{2}))c_0\ 
        \exp(-ck) \notag ,\\
        &\leq{}  c_0\exp\left(\frac{c}{2}\right) 
        \sum_{k=M}^{\infty}\exp(-ck/2) \equiv I(M).
    \end{align}
    Thus $\exists \ M < \infty $ that depends on $c$ and $c_0$, such that $I(M) < \eta/2$. Then for this finite $M$, $\exists \ \delta < c/2$, s.t. 
    \begin{equation*}
        \exp(\delta M)< 1 + \eta/2.
    \end{equation*}
    Thus we can get 
    \begin{align*}
        1 \leq \mathbb{E}\left[exp(\delta X)\right]={} & 
        \mathbb{E}\left[exp(\delta X)\mathbb{I}_{\{X < 
        M\}}\right]+\mathbb{E}\left[exp(\delta X)\mathbb{I}_{\{X \geq 
        M\}}\right], \\
        <{} & 1 + \eta /2 +\eta /2 = 1+ \eta,
    \end{align*}
    where $\mathbb{I}$ represents the indicative function.
\end{proof}

Combining Lemma \ref{lem31} and \ref{lem33}, we can extend Lemma \ref{lem32} to uniformly exponentially integrable and independent, but not necessarily identically distributed r.v.'s
\begin{cor}\label{cor31}
    Let $\{X_n\}_{n=1}^{\infty}$ be an independent sequence of r.v.'s that 
    are uniformly exponentially integrable. $N \sim G(p)$ independent to 
    $\{X_n\}_{n=1}^{\infty}$. Then $Y = \sum_{n=1}^{N} X_n$ is also 
    exponentially integrable.
\end{cor}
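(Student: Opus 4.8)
The plan is to run the argument of Lemma~\ref{lem32} essentially verbatim, with the single modification that the exponential moment bound must be made uniform in $n$, and that uniformity is exactly what Lemma~\ref{lem33} supplies. By hypothesis there are $c>0$ and $c_0<\infty$ with $\mathbb{E}[\exp(cX_n)]\le c_0$ for every $n$. First I would fix $\eta>0$ small enough that $(1-p)(1+\eta)<1$ --- for instance $\eta=\tfrac{p}{2(1-p)}$ --- and then apply Lemma~\ref{lem33}: since the $\delta$ it produces depends only on $\eta$, $c$, $c_0$ and not on the particular law of $X_n$, one obtains a \emph{single} $\delta>0$ with $\mathbb{E}[\exp(\delta X_n)]\le 1+\eta$ simultaneously for all $n$.

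Next I would condition on $N$. For each fixed $n$, independence of $X_1,\dots,X_n$ together with the finite-sum case of Lemma~\ref{lem31} gives $\mathbb{E}\bigl[\exp(\delta\sum_{k=1}^{n}X_k)\bigr]=\prod_{k=1}^{n}\mathbb{E}[\exp(\delta X_k)]\le(1+\eta)^n$. Since $N\sim G(p)$ is independent of $\{X_n\}$ and all terms are nonnegative (so the interchange below is legitimate by Tonelli),
\[
\mathbb{E}\bigl[\exp(\delta Y)\bigr]=\sum_{n=1}^{\infty}(1-p)^{n-1}p\,\mathbb{E}\!\left[\exp\!\Bigl(\delta\sum_{k=1}^{n}X_k\Bigr)\right]\le p\sum_{n=1}^{\infty}(1-p)^{n-1}(1+\eta)^n=\frac{p(1+\eta)}{1-(1-p)(1+\eta)}<\infty,
\]
where finiteness of the geometric sum uses the choice $(1-p)(1+\eta)<1$. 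This shows $Y$ is exponentially integrable, proving the corollary.

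I do not expect a real obstacle here: the only new point relative to Lemma~\ref{lem32} is obtaining a common exponent $\delta$ across the (non-identically distributed) $X_n$, and that is precisely the role of Lemma~\ref{lem33}. The remaining work is bookkeeping --- calibrating $\eta$ against $p$ so that the geometric series converges, and justifying the sum/expectation interchange --- so the corollary really is just the ``uniformly exponentially integrable'' restatement of the i.i.d.\ lemma.
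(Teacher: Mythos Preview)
Your proof is correct and follows essentially the same route as the paper: use Lemma~\ref{lem33} to obtain a single $\delta$ with $\mathbb{E}[\exp(\delta X_n)]\le 1+\eta$ uniformly in $n$, then condition on $N$ and sum the resulting geometric series. Your explicit choice $\eta=\tfrac{p}{2(1-p)}$ even coincides with the paper's bound $(1+\tfrac{1}{1-p})/2$, and your write-up is in fact slightly more careful in justifying the product over the non-identically distributed $X_k$ and the interchange via Tonelli.
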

\begin{proof}
	From Lemma \ref{lem33}, we have that
	\begin{equation}
		\lim_{\delta \to 0^{+}}\mathbb{E}\left[exp(\delta X_n) \right] = 1,	
	\end{equation}
uniformly for every $n$ as $\delta \to 0^{+}$.

As a result, $\exists \ \delta_0$ s.t. $\mathbb{E}\left[\exp(\delta_0 X_n)\right] \in [1,(1+\frac{1}{1+p})/2]$
 uniformly for every $n$. Thus
	\begin{align}
		\mathbb{E}\left[\exp(\delta_0 Y)\right]
		&={} \sum_{n=1}^{\infty} \left(\mathbb{E}\left[\exp(\delta_0 
		X_1)\right]\right)^n \left(1-p\right)^{n-1} p, \\
		&\leq{} \sum_{n=1}^{\infty} \frac{(1+\frac{1}{1-p})^n}{2^n} 	
		\left(1-p\right)^{n-1} p, \\
		&={}  \sum_{n=1}^{\infty} p(1-\frac{p}{2})^n < \infty.
	\end{align}
So we have proved the corollary.
\end{proof}

For all $y \in \mathbb{R}$, let $B_t^y$ be the standard Brownian Motion starting from $y$ and $X_t^y$ be the OU Process in \eqref{1} starting from $y$. Moreover, we define the following stopping times:
\begin{equation*}
	\tau_{es}^y = \inf\{t \geq 0,|X_t^y| \geq 2\},
\end{equation*}
which means the first time $X_t^y$ escapes interval $(-2,2)$
% escape $2$ or $-2$. 
\begin{equation*}
\tau_{r,+}^y = \inf\{t \geq 0,X_t^y = 1\},
\end{equation*}
\begin{equation*}
    \tau_{r,-}^y = \inf\{t \geq 0,X_t^y = -1\},
\end{equation*}
which means the first time $X_t^y$ return to $1$ and $-1$. For these stopping times, one has the following lemmas.	
\begin{lem}\label{lem34}
    $\tau_{r,+}^2,\tau_{r,-}^{-2}$ are both exponentially integrable.
\end{lem}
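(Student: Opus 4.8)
The plan is to reduce the statement to a first-passage estimate for Brownian motion with drift. First, observe that the Ornstein--Uhlenbeck dynamics \eqref{1} is invariant under $x \mapsto -x$: if $X_t$ solves \eqref{1} started at $y$, then $-X_t$ solves \eqref{1} started at $-y$ (with driving noise $-B_t$, again a Brownian motion). Hence $\tau_{r,-}^{-2}$ has the same law as $\tau_{r,+}^{2}$, and it is enough to prove that $\tau := \tau_{r,+}^{2}$ is exponentially integrable, i.e. $\mathbb{E}[\exp(c\tau)] < \infty$ for some $c = c(\epsilon) > 0$.

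The key point is that before time $\tau$ the process stays in $\{x \ge 1\}$, where the OU drift satisfies $-\epsilon x \le -\epsilon < 0$, so it pushes toward the target level $1$ at least as hard as a constant drift $-\epsilon$ would. I would therefore couple $X_t := X_t^{2}$ with the drifted Brownian motion $Y_t := 2 - \epsilon t + \sqrt{2}\,B_t$ driven by the same $B_t$. The martingale parts cancel in the difference, giving the pathwise identity $X_t - Y_t = -\epsilon \int_0^t (X_s - 1)\,ds$; since $X_s > 1$ for all $s < \tau$, this yields $X_t \le Y_t$ for every $t < \tau$. By continuity, $Y$ cannot reach level $1$ before $X$ does, so $\tau \le \sigma$, where $\sigma := \inf\{t \ge 0 : Y_t = 1\}$ is the first passage time of $Y$ to a level one unit below its start, in the direction of its (negative) drift.

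It then remains to recall that such a first-passage time is exponentially integrable. For $\theta \in (0,\epsilon/2)$ the process $\exp\!\big(\theta(Y_t - 2) + (\theta\epsilon - \theta^2) t\big)$ is a positive martingale; applying optional stopping at $\sigma \wedge t$, discarding the nonnegative contribution of the event $\{\sigma > t\}$, and letting $t \to \infty$ give $\mathbb{E}[\exp((\theta\epsilon - \theta^2)\sigma)] \le e^{\theta} < \infty$. Optimizing over $\theta$ shows $\mathbb{E}[\exp(c\sigma)] < \infty$ for every $c < \epsilon^2/4$, and hence $\mathbb{E}[\exp(c\tau)] \le \mathbb{E}[\exp(c\sigma)] < \infty$, which proves the lemma. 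The main thing to be careful about is that the drift is genuinely needed: comparing instead with a driftless Brownian motion would only give the heavy tail $\mathbb{P}(\tau > t) \lesssim t^{-1/2}$, so the comparison must retain the $-\epsilon$; relatedly, the rate $c$ obtained in this way degenerates like $\epsilon^2$ as $\epsilon \to 0^+$, which is harmless for the present statement but should be kept in mind wherever $\epsilon$-uniform constants are later required. The coupling identity and the passage-time computation are otherwise routine, and I do not anticipate a real obstacle.
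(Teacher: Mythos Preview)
Your proof is correct and follows essentially the same route as the paper: both arguments rest on the pathwise comparison $X_t^2 \le 2 - \epsilon t + \sqrt{2}\,B_t$ valid before $\tau_{r,+}^2$, together with the symmetry $\tau_{r,-}^{-2}\stackrel{d}{=}\tau_{r,+}^{2}$. The only cosmetic difference is that the paper extracts the exponential tail directly from the Gaussian estimate $\mathbb{P}(\tau_{r,+}^2>t)\le \mathbb{P}(\sqrt{2}B_t \ge \epsilon t -1)\le \tilde C\exp(-\epsilon^2 t/32)$, whereas you pass through the first-passage time $\sigma$ of the drifted Brownian motion and bound $\mathbb{E}[e^{c\sigma}]$ via optional stopping; both yield an exponential rate of order $\epsilon^2$, and your observation that this rate degenerates as $\epsilon\to 0^+$ matches the paper's implicit constant.
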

\begin{proof}
    By definition of the OU Process
    \begin{equation*} 
        X_t^2 = 2- \epsilon \int_{0}^{t} X_s^2 ds + \sqrt{2} B_t. 
    \end{equation*}
    Thus given the event $\tau_{r,+}^2 > t$, $X_s^2 > 1$ for all $s \leq t$, and
    \begin{equation*}
        1 < X_t^2 = 2 - \epsilon \int_{0}^{t}X_s^2 ds + \sqrt{2} B_t \leq 2 - 
        \epsilon t  + \sqrt{2} B_t, 
    \end{equation*}
    which implies that
    \begin{equation*}
        \mathbb{P}\left(\tau_{r,+}^2 >t \right)\leq \mathbb{P} 
        \left(\sqrt{2} B_t \geq \epsilon t-1\right) \leq  \mathbb{P}\left(B_t 
        \geq \frac{\sqrt{2}}{4} \epsilon t \right),
    \end{equation*}
    for  $t = O(\frac{1}{\epsilon})$. At the same time, one has
    \begin{align*}
        \mathbb{P}\left(B_t \geq \frac{\sqrt{2}}{4} \epsilon t \right) &={} 
        \int_{\frac{\sqrt{2}}{4} \epsilon t}^{+\infty} \frac{1}{\sqrt{2\pi t}} 
        e^{-\frac{x^2}{2t}} dx
        = \int_{\frac{\epsilon}{4}\sqrt{t}}^{+\infty} 
        \frac{1}{\sqrt{\pi}} e^{-y^2} dy 
        = \frac{1}{\sqrt{\pi}} 
        \int_{\frac{\epsilon}{4}\sqrt{t}}^{+\infty}  
        e^{\frac{-y^2}{2}} e^{\frac{-y^2}{2}} dy \\
        &\leq{} \frac{1}{\sqrt{\pi}} \exp(-\frac{\epsilon^2}{32}t) 
        \int_{\frac{\epsilon}{4}\sqrt{t}}^{+\infty}  
        e^{\frac{-y^2}{2}}  dy 
        \leq \tilde{C} \exp(-\frac{\epsilon^2}{32}t).
    \end{align*}
The same result holds for $\tau_{r,-}^{-2}$ as it is identically distributed as $\tau_{r,+}^2$ by symmetry. 
% So we have finished our proof.
\end{proof}

\begin{lem}\label{lem35}
    For the family of r.v.'s $\{\tau_{es}^y \}_{y \in [-2,2]}$, they are uniformly exponential integrable with respect to(w.r.t.) $y$.
\end{lem}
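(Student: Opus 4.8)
The plan is to reduce uniform exponential integrability of $\{\tau_{es}^y\}_{y\in[-2,2]}$ to a single, $y$-uniform estimate: that starting from any $y\in[-2,2]$ the OU process exits $(-2,2)$ within one unit of time with probability bounded away from $0$. Once we have $q:=\sup_{y\in[-2,2]}\mathbb{P}(\tau_{es}^y>1)<1$, the Markov property of the time-homogeneous process \eqref{1} upgrades this to a geometric tail: on the event $\{\tau_{es}^y>n\}$ one has $X_n^y\in(-2,2)$, so conditioning on $\mathcal{F}_n$ and restarting the process at time $n$ gives $\mathbb{P}(\tau_{es}^y>n+1\mid\mathcal{F}_n)\mathbf{1}_{\{\tau_{es}^y>n\}}\le q\,\mathbf{1}_{\{\tau_{es}^y>n\}}$, and induction yields $\mathbb{P}(\tau_{es}^y>n)\le q^n$ for all $n\in\mathbb{N}$, uniformly in $y$. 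This is essentially the ``geometric number of trials'' mechanism already used in Lemma \ref{lem32} and Corollary \ref{cor31}.

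To obtain the one-step bound I would compare the OU path with its driving Brownian motion. While $X_s^y\in(-2,2)$ we have $|\epsilon X_s^y|\le 2\epsilon<1$ for $\epsilon$ sufficiently small, so on the event $\{\tau_{es}^y>1\}$ the integral form of \eqref{1} gives $\sqrt{2}\,|B_t|=\bigl|X_t^y-y+\epsilon\int_0^t X_s^y\,ds\bigr|\le 2+2+1=5$ for all $t\in[0,1]$. Hence $\{\tau_{es}^y>1\}\subseteq\{\sup_{t\in[0,1]}|B_t|\le R\}$ for a universal constant $R$ (in particular independent of $y$ and of $\epsilon$), and since $\mathbb{P}\bigl(\sup_{t\in[0,1]}|B_t|>R\bigr)\ge\mathbb{P}(B_1>R)>0$ we conclude $q\le\mathbb{P}\bigl(\sup_{t\in[0,1]}|B_t|\le R\bigr)<1$, uniformly in $y$.

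Finally, from $\mathbb{P}(\tau_{es}^y>t)\le q^{\lfloor t\rfloor}\le q^{-1}e^{-ct}$ with $c:=-\log q>0$, integrating $\mathbb{E}\bigl[\exp(c'\tau_{es}^y)\bigr]=1+c'\int_0^\infty e^{c't}\,\mathbb{P}(\tau_{es}^y>t)\,dt$ for any fixed $c'\in(0,c)$ produces a finite bound $c_0$ that does not depend on $y$, which is exactly the definition of uniform exponential integrability of the family. The only step that needs genuine attention is the uniformity in $y$ of the one-step estimate; the pathwise comparison makes this transparent, because confining $X^y$ to $(-2,2)$ forces the driving Brownian motion into a fixed bounded set regardless of the starting point, so I do not anticipate any real obstacle beyond bookkeeping the constants.
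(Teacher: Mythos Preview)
Your proposal is correct and follows the same overall architecture as the paper: establish a $y$-uniform lower bound on $\mathbb{P}(\tau_{es}^y\le 1)$, then iterate via the Markov property to get a geometric tail $\mathbb{P}(\tau_{es}^y>n)\le q^n$, which gives uniform exponential integrability.

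The only difference is in how the one-step bound is obtained. The paper uses the explicit Gaussian transition law $X_1^{y_0}\sim N\bigl(e^{-\epsilon}y_0,\,(1-e^{-2\epsilon})/\epsilon\bigr)$ and bounds $\mathbb{P}(|X_1^{y_0}|>2)$ from below uniformly in $y_0\in[-2,2]$. You instead argue pathwise: on $\{\tau_{es}^y>1\}$ the drift integral is $O(\epsilon)$, which forces $\sup_{t\le 1}|B_t|\le R$ for a universal $R$, and this has probability strictly less than one. Both routes are short and standard; yours has the mild advantage of not invoking the explicit OU transition density and of making the $y$-uniformity (and $\epsilon$-uniformity) completely transparent, while the paper's is slightly more direct once one recalls the transition formula. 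Either way the substance is the same.
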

\begin{proof}
    Recalling the transition property of OU Process, for any $|y_0| \leq 2$
    \begin{equation*}
        X_1^{y_0} \sim N\left(e^{-\epsilon}y_0,\frac{1-e^{-2\epsilon}}{\epsilon}\right).
    \end{equation*}
    Noting that $\epsilon$ is sufficiently small, one gets $2 - y_0 e^{-\epsilon} < 4$. Thus we have
    \begin{equation}\label{pr4}
        \mathbb{P}\left(\tau_{es}^{y_0}<1\right) \geq{}  \mathbb{P}\left(|X_1^{y_0}|>2\right) 
        \geq \mathbb{P}\left(N(0,1)> 
        \frac{2-y_0e^{-\epsilon}}{\sqrt{\frac{1}{\epsilon}(1-e^{-2\epsilon})}}\right)=P_0 > 0.
    \end{equation}
    Note that the right hand side(RHS) of \eqref{pr4} does not depend on $y_0$. By 
    Markov Property\cite{J1962Markov} of OU Process, for any integer $n$, we have
    \begin{equation*} 
        \mathbb{P}\left(\tau_{es}^{y_0}>n\right) \leq \left[\sup_{|y_0| 
        \leq 2} \mathbb{P}\left(\tau_{es}^{y_0} \geq 1\right)\right]^n 
        \leq \left(1-P_0\right)^n.
    \end{equation*}
\end{proof}

On the other hand, for any OU Process starting from $[-1,1]$, it will with 
strictly positive probability stay within $[0,2]$ for at least a fixed
% a finite 
positive time before existing $[-2,2]$. To be specific,
\begin{lem}\label{lem36}
    $\exists \ P_0 > 0$, s.t. $\forall \ |y| \leq 1$
    \begin{equation}
        \mathbb{P}\left(\tau_{es}^y > 3,\quad 
        \int_{0}^{3}\mathbb{I}_{\{X_t^y \in [0,2]\}}dt \geq 1 \right) \geq 
        P_0. 
    \end{equation}
\end{lem}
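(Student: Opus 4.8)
The plan is to show that the event in the lemma is implied by a ``tube event''---that $X^y$ stays uniformly within distance $\tfrac12$ of a fixed reference path on $[0,3]$---and then to bound the probability of this tube event below by a constant that does not depend on $y\in[-1,1]$ nor on small $\epsilon$. Concretely, for $|y|\le 1$ let $\phi^y\in C([0,3])$ be the piecewise linear path with $\phi^y(0)=y$ that rises linearly to $1$ on $[0,1]$ and equals $1$ on $[1,3]$, so that $\phi^y$ takes values in $[-1,1]$ on $[0,1]$. On the event $A:=\{\sup_{0\le t\le 3}|X_t^y-\phi^y(t)|<\tfrac12\}$ we have $|X_t^y|\le\tfrac32<2$ for every $t\le 3$, whence $\tau_{es}^y>3$; and for $t\in[1,2]$ we have $X_t^y\in[\tfrac12,\tfrac32]\subset[0,2]$, so $\int_0^3\mathbb{I}_{\{X_t^y\in[0,2]\}}\,dt\ge 1$. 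Thus $A$ is contained in the event of the lemma, and it suffices to bound $\mathbb{P}(A)$ below uniformly in $y$.

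Next I would remove the drift. On $A$ the path stays in $[-2,2]$, so the OU drift $-\epsilon X_t^y$ is bounded by $2\epsilon$. Comparing the law of $X^y$ with the law $\mathbb{Q}$ of $y+\sqrt2\,B_t$ via Girsanov's theorem, the Radon--Nikodym density, restricted to paths confined to $[-2,2]$ on $[0,3]$, is $\exp\bigl(-\tfrac{\epsilon}{2}\int_0^3 W_s\,dW_s-\tfrac{\epsilon^2}{4}\int_0^3 W_s^2\,ds\bigr)$, which (using $\int_0^3 W_s\,dW_s=\tfrac12(W_3^2-y^2)-3$ and $|W_s|\le2$) is bounded below by a constant $\to 1$ as $\epsilon\to 0$, uniformly in $|y|\le1$; hence it is $\ge\tfrac12$ for $\epsilon$ small. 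Therefore $\mathbb{P}(A)\ge\tfrac12\,\mathbb{Q}(A)=\tfrac12\,\mathbb{P}\bigl(\sup_{t\le3}|\sqrt2\,B_t-\psi^y(t)|<\tfrac12\bigr)$, where $\psi^y:=\phi^y-y$ satisfies $\psi^y(0)=0$.

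For fixed $y$, positivity of this last probability is a small-ball/support property of Brownian motion (the Stroock--Varadhan support theorem, or equivalently the Cameron--Martin theorem applied to the piecewise-$C^1$ shift $\psi^y$, combined with the positivity of the Brownian small-ball probability). For the uniformity, note that $y\mapsto\psi^y$ is an isometry into $C([0,3])$ (indeed $\|\psi^y-\psi^{y'}\|_\infty=|y-y'|$), so one can cover $[-1,1]$ by finitely many points $y_1,\dots,y_N$ with every $y$ within $\tfrac14$ of some $y_i$; then $\|\psi^y-\psi^{y_i}\|_\infty<\tfrac14$ forces $\{\|\sqrt2\,B-\psi^{y_i}\|_\infty<\tfrac14\}\subseteq\{\|\sqrt2\,B-\psi^y\|_\infty<\tfrac12\}$, and one takes
\[
P_0:=\tfrac12\min_{1\le i\le N}\mathbb{P}\bigl(\|\sqrt2\,B-\psi^{y_i}\|_\infty<\tfrac14\bigr)>0 .
\]

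The main obstacle is precisely this uniformity in $y$: the support theorem by itself supplies a strictly positive probability for each fixed $y$ but not a uniform floor, so one genuinely needs the compactness/covering step (or, alternatively, a continuity argument for $y\mapsto\mathbb{Q}(A)$ on the compact set $\{\psi^y:|y|\le1\}$). Handling the $\epsilon$-dependent drift, by contrast, is routine once one observes it is uniformly small on $[-2,2]$, and one could even bypass Girsanov entirely by invoking the support theorem directly for the OU family $\{X^y\}$ and then running the same covering argument.
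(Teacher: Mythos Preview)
Your argument is correct, but it follows a genuinely different route from the paper's. The paper first uses symmetry to reduce to $y\le 0$, then factors the event via the strong Markov property at the first hitting time $\tau_0^y$ of $0$: it bounds below (i) the probability that $X^y$ reaches $0$ before time $1$ and before escaping $[-2,2]$, by a direct pathwise comparison with $\sqrt2\,B^y$ that converts the OU hitting events into Brownian ones; and (ii) the probability that $X^0$ stays in $(-2,2)$ for time at least $2$ while spending at least half that time in $[0,2]$, using the symmetry of $X^0$ and Doob's maximal inequality. Your approach instead packages everything into a single tube event around a piecewise-linear path, removes the drift in one stroke by Girsanov, and handles the uniformity in $y$ through a compactness covering and the support theorem. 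The paper's argument is more elementary---no Girsanov, no support theorem, only optional stopping and the reflection principle---and exploits the specific geometry of the problem (the distinguished role of $0$ and the symmetry of $X^0$). Your argument is softer and more portable: it would work verbatim for any bounded drift, not just the OU one, and it sidesteps the somewhat delicate event inclusions the paper needs for its Brownian comparison. Both yield a $P_0$ that is uniform in $y\in[-1,1]$ and in small $\epsilon$.
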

\begin{proof}
    By symmetry, one may without loss of generality assume $y<0$, we 
    further define a stopping time $\tau_{0}^{y}=\inf\{t \geq 0, X_t^y =0\}$. Then by strong Markov property, as $X_t^y = 0$, we can consider a new OU 
    process starting from $0$. As a result, we have the following 
    inequality  
    \begin{equation}\label{lem36ineq}
        \mathbb{P}\left(\tau_{es}^y > 3,\int_{0}^{3}\mathbb{I}_{\{X_t^y 
        \in [0,2]\}}dt \geq 1 \right) \geq  A_1 A_2,
    \end{equation}
    where we set 
    \begin{equation*}
        A_1 = \mathbb{P}\left(\tau_{0}^y < \tau_{es}^y \wedge 
        1\right) =\mathbb{P}\left(\{\tau_{0}^y < \tau_{es}^y\} \cap \{\tau_0^y < 1 \} \right),
    \end{equation*}
    and
    \begin{equation*}
        A_2 = \min_{t \in [2,3]} \mathbb{P}\left( \tau_{es}^0 > t, 
        \int_{0}^{t}\mathbb{I}_{\{X_s^0 \in [0,2]\}}ds \geq t/2 \right).
    \end{equation*}
    Note that the event in RHS of \eqref{lem36ineq} is contained in the event of left hand side (LHS).
    %is a special case of the left hand side(LHS). 
    First, consider the stopping time $\tau_0^y$ less than 
    $\tau_{es}^y$ and $1$. Then as $X_t^y = 0$, we take the consideration of the new process starting from $0$. So we take account of a new stopping 
    time $\tau_{es}^0$. And it should be larger than $2$ because of 
    $\tau_{es}^y > 3$.
     
    For $A_1$, since $y<0$, and
    \begin{equation*}
        X_t^y = -\epsilon \int_{0}^{t}X_s^y ds + \sqrt{2} B_t^y,
    \end{equation*}
    one may define two stopping times
    \begin{equation*}
        \bar{\tau}_0^y= \inf \{t: \sqrt{2} B_t^y = 0\}, \quad \bar{\tau}_{-2}^y= 
        \inf \{t: \sqrt{2} B_t^y = -2\}.
    \end{equation*}
    Similar as before, in the event ${\tau_0^y>t}$, one has
    \begin{equation}\label{pr5}
        \sqrt{2} B_s^y=X_s^y + \epsilon \int_{0}^{s} X_h^y dh < 0,\quad \forall 
        s \leq t,
    \end{equation}
    which implies $\bar{\tau}_0^y > t$.
    
    Moreover, for the event $\{\tau_{es}^y < \tau_0^y\}$, continuity of the OU Process allows us to write the following decomposition:
    \begin{equation*}
        \{\tau_{es}^y < \tau_0^y\}= \bigcup_{q \in 
        \mathbb{Q}^+}\{X_q^y<-2,\tau_0^y>q\}. 
    \end{equation*}
    Now for each rational $q$, given $\{X_q^y <-2,\tau_0^y>q\}$, by \eqref{pr5} one has $\bar{\tau}_0^y>q$, so that
    \begin{equation*}
        \sqrt{2} B_q^y = X_q^y + \epsilon \int_{0}^{s}X_h^y dh< X_q^y <-2
    \end{equation*}
    which implies that $\bar{\tau}_{-2}^y<q$. Thus
    \begin{equation*}
        \{X_q^y <-2,\tau_0^y>q\} \subset \{\sqrt{2} B_q^y 
        <-2,\bar{\tau}_0^y>q\}, 
    \end{equation*}
    and
    \begin{equation}\label{pr6}
        \{\tau_{es}^y < \tau_0^y\} \subset \{\bar{\tau}_{-2}^y < 
        \bar{\tau}_0^y\}.  
    \end{equation}
    By \eqref{pr5} and \eqref{pr6}, one can bound $A_1$ from below by
    \begin{equation}\label{pr7}
        \mathbb{P}\left(\bar{\tau}_0^y < \bar{\tau}_{-2}^y \wedge 
        1\right)\geq \mathbb{P}\left(\bar{\tau}_0^{-1} < 
        \bar{\tau}_{-2}^{-1} \wedge 1\right) \geq 
        \frac{1}{2}\mathbb{P}\left(|N(0,2)|>1\right)>0.
    \end{equation}
    Now for
    \begin{equation*}  
        A_2 = \min_{t \in [2,3]}\mathbb{P}\left(\tau_{es}^0>t, 
        \int_{0}^{t}\mathbb{I}_{\{X_s^0 \in [0,2]\}}ds \geq 
        \frac{t}{2}\right), 
    \end{equation*}
    by symmetry we have
    \begin{equation}\label{pr8}
        \mathbb{P}\left(\tau_{es}^0>t, \int_{0}^{t}\mathbb{I}_{\{X_s^0 
        \in [0,2]\}}ds \geq \frac{t}{2}\right) \geq 
        \frac{1}{2}\mathbb{P}\left(\tau_{es}^0 > t\right) \geq 
        \frac{1}{2}\mathbb{P}\left(\tau_{es}^0 > 3\right).
    \end{equation}
    Now we note that $X_t^0 = \sqrt{2} e^{-\epsilon t}\int_{0}^{t}e^{\epsilon s} dB_s$. So for any t,
    \begin{equation}\label{pr9}
        \mathbb{P}\left(\tau_{r,+}^0 \wedge \tau_{r,-}^0 >t \right)\geq 
        \mathbb{P}\left(\max_{s \leq t}|\sqrt{2} \int_{0}^{s}e^{\epsilon h}
        dB_h|< 1 \right).
    \end{equation}
    Note that $\int_{0}^{s}e^{\epsilon h} dB_h$ is a martingale. By Doob's Maximum Theorem, there exists a $t_0$,
    % $\exists \ t_0 >0$,
    s.t. the RHS of \eqref{pr9} is greater than $1/2$. By \eqref{pr6}, one has
    \begin{equation}\label{pr10}
        \mathbb{P}\left(\tau_0^1 \leq \tau_2^1 \right)\geq 
        \mathbb{P}\left(\bar{\tau}_0^1 \leq \bar{\tau}_2^1\right)= 
        \frac{1}{2}. 
    \end{equation}
    Thus by \eqref{pr9}, \eqref{pr10} and strong Markov property,
    \begin{equation}\label{pr11}
        \mathbb{P}\left(\tau_{es}^0 >3 \right)\geq 
        (\frac{1}{4})^{[\frac{3}{t_0}]+1}>0.
    \end{equation}
    Note that \eqref{pr7} and \eqref{pr11} do not depend on $y$. Let 
    \begin{equation*}
        P_0 = 
        \frac{1}{2}\mathbb{P}\left(|N(0,2)|>1\right)
        (\frac{1}{4})^{[\frac{3}{t_0}]+1}
         >0.
    \end{equation*}
    The proof of Lemma \ref{lem36} is complete.
\end{proof}

Now we have all the tools in place and now conclude the proof of Proposition \ref{thm31}. 
% and give the proof of Proposition \ref{thm31}.
\begin{proof}
    Firstly, we define three kinds of non-decreasing sequence of stopping 
    times. Let $\Gamma_0^1=\Gamma_0^2=\Gamma_0^3=0$, for any integer $n 
    \geq 1$, define
    \begin{equation*}
        \Gamma_n^1 = \inf \{t \geq \Gamma_{n-1}^3,\quad X_t = \pm 1\},
    \end{equation*}
which means the first time t such that $X_t$ reach $1$ or $-1$ after 
% $t \geq
$\Gamma_{n-1}^3$.
    \begin{equation*}
        \Gamma_n^2 = \Gamma_n^1 +3 \wedge \inf \{t \geq \Gamma_{n}^1,\quad 
        X_t = \pm 2\},
    \end{equation*}
where $\wedge$ means choosing the smaller one.
    \begin{equation*}
        \Gamma_n^3 = \inf \{t \geq \Gamma_{n}^2,\quad X_t = \pm 2\}.
    \end{equation*}
    Then we define a a sequence of Bernoulli r.v. as follows
    \begin{equation*}
        Id(n)=\mathbb{I}_{\{\Gamma_n^2=\Gamma_n^1+3,
        \int_{\Gamma_n^1}^{\Gamma_n^2}\textbf{1}_{\{X_t
         \in [0,2]\}}dt>1\}}.
    \end{equation*}
    Finally, let $N(0)=0$, and for all $k \geq 1$,
    \begin{equation*}
        N(k)=\inf\{n: \sum_{m=1}^{n}Id(m)=k\}.
    \end{equation*}
    By Lemma \ref{lem34}-\ref{lem36} and strong 
    Markov property, all the stopping times above are with probability $1$ 
    finite. Besides, $Id(n)$ ($n=1,2,...$) forms an i.i.d. sequence. Thus $N(1) 
    \sim G(p)$ for some $p \geq P_0$, and
    \begin{equation*}
        \begin{matrix}
            N(1), & N(2)-N(1), & \ldots, & N(k)-N(k-1), & \ldots ,\\
            \Gamma_{N(1)}^3, & \Gamma_{N(2)}^3-\Gamma_{N(1)}^3, & \ldots, & 
            \Gamma_{N(k)}^3-\Gamma_{N(k-1)}^3, & \ldots.
        \end{matrix}
    \end{equation*}
    both form i.i.d. sequences.
    
    For $\Gamma_{N(1)}^2$, given $N(1)=m$, again by strong Markov property,
    \begin{equation*}
        \Gamma_1^3,\ \Gamma_2^3-\Gamma_1^3,\ \ldots,\ 
        \Gamma_{m-1}^3-\Gamma_{m-2}^3,\ \Gamma_m^3-\Gamma_{m-1}^3
    \end{equation*}
    are independent and uniformly exp integrable by Lemma \ref{lem34} and Lemma \ref{lem35} which indicates $\exists\ c_0 < \infty$, s.t.
    \begin{equation*}
        \mathbb{E}\left[\exp(\epsilon_2 
        \left(\Gamma_{k}^3-\Gamma_{k-1}^3\right) \right] \leq c_0,
    \end{equation*}
    where $\epsilon_2=\min\{\frac{\epsilon_1}{2},
    \frac{1}{2}\log(\frac{1}{1-P_0})\}$
    ($\epsilon_1$ is in Lemma \ref{lem34}, $P_0$ is from Lemma \ref{lem35}).
    Thus by the same argument as Corollary \ref{cor31}, $\Gamma_{N(1)}^3$ is exponentially integrable.

    Define $\Gamma^{'}=\frac{1}{C_+}\Gamma$. Recall that $\Gamma \sim \exp(1)$, we have $[\Gamma^{'}]+1$ is a geometric r.v. By definition
    \begin{equation}\label{pr12}
        \Gamma_{N([\Gamma^{'}]+1)}^3 = \sum_{n=1}^{[\Gamma^{'}]+1} 
        \left(\Gamma_{N(n)}^3 - \Gamma_{N(n-1)}^3\right)>T. 
    \end{equation}
    \eqref{pr12} holds since for each $n \geq 1$, 
    $\int_{0}^{t}\mathbf{1}_{\{X_s \in [0,2]\}}ds$ will increase at least 
    $1$ between $\Gamma_{N(n-1)}^3$ and $\Gamma_{N(n)}^3$. Thus the whole 
    proof of Proposition \ref{thm31} is concluded by 
    Lemma \ref{lem32}.
\end{proof}

%%%%%%%%%%%%%%%%%%%%%%%%%%%%%%%%%%%%%%%%%%%%%%%%%
\subsection{Power law decay in the mesoscopic time scale}
%middle}
In Proposition \ref{thm31} we have shown that the 
distribution of $T$ will eventually have an exponential decay as its 
asymptotic. For an OU process $X_t$ in \eqref{1} with $X_0 = -2$ 
where $\epsilon \ll 1$. we will show that $P(T>t)$ is at least of order $O(\frac{1}{t^{1/2}})$ in a appropriate intermediate
%proper 
time scale which is polynomial with respect to $\epsilon$.
% depending on $\epsilon$.
That is
\begin{prop}\label{thm32}
    For any $\alpha \in (0,\frac{1}{6})$ there is a $M=M(\alpha)< \epsilon^{-\frac{1}{2}}$ depending only on $\alpha$ such that
    \begin{equation}\label{pr13}
        \mathbb{P}\left(T>t\right) \geq 
        \frac{1}{2t^{\frac{1}{2}+\alpha}}, \quad \forall t \in 
        [M,{\epsilon}^{- \frac{1}{2}}],
    \end{equation}
for all sufficiently small $\epsilon \ll 1$.
\end{prop}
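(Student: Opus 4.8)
The plan is to lower-bound $\mathbb{P}(T>t)$ by the probability that $X_t$ never enters the killing region $[0,\infty)$ before time $t$. Since $\Lambda$ vanishes on $(-\infty,0)$, on the event $\{X_s<0\text{ for all }s\in[0,t]\}$ we have $\int_0^t\Lambda(X_s)\,ds=0$, which is $<\Gamma$ a.s.\ because $\Gamma>0$ a.s.; hence this event lies in $\{T>t\}$, giving
\[
\mathbb{P}(T>t)\ \ge\ \mathbb{P}\bigl(X_s<0\ \text{for all }s\in[0,t]\bigr),\qquad X_0=-2.
\]
It therefore suffices to show the right-hand side is at least $\tfrac12 t^{-1/2-\alpha}$ on $[M,\epsilon^{-1/2}]$; in fact one gets $\tfrac12 t^{-1/2}$ with an \emph{absolute} constant $M$, which is stronger.

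For this one-sided barrier estimate I would use the explicit solution of \eqref{1}: $X_t=e^{-\epsilon t}\bigl(-2+\sqrt2\,M_t\bigr)$ with $M_t=\int_0^t e^{\epsilon s}\,dB_s$. Because $e^{-\epsilon t}>0$, the event $\{X_s<0\ \forall s\le t\}$ coincides exactly with $\{\sup_{s\le t}M_s<\sqrt2\}$. Now $M$ is a continuous martingale started at $0$ with \emph{deterministic} quadratic variation $\langle M\rangle_t=\int_0^t e^{2\epsilon s}\,ds=\frac{e^{2\epsilon t}-1}{2\epsilon}$, so by the Dambis--Dubins--Schwarz time change $M_s=\beta_{\langle M\rangle_s}$ for a standard Brownian motion $\beta$, whence $\sup_{s\le t}M_s=\sup_{u\le\langle M\rangle_t}\beta_u$. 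The reflection principle then yields the exact formula
\[
\mathbb{P}\bigl(X_s<0\ \forall s\le t\bigr)=\mathbb{P}\Bigl(\sup_{u\le\langle M\rangle_t}\beta_u<\sqrt2\Bigr)=2\Phi\!\left(\frac{\sqrt2}{\sqrt{\langle M\rangle_t}}\right)-1,
\]
$\Phi$ being the standard normal distribution function.

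Two elementary estimates then finish the argument. First, from $x\le e^x-1\le xe^x$ we get $t\le\langle M\rangle_t\le t\,e^{2\epsilon t}$, and on $t\le\epsilon^{-1/2}$ this means $\langle M\rangle_t=t\,(1+O(\sqrt\epsilon))$; in particular $\langle M\rangle_t\ge t\ge M$, so $x:=\sqrt2/\sqrt{\langle M\rangle_t}$ is small. Second, the Taylor bound $2\Phi(x)-1\ge\sqrt{2/\pi}\,x\,(1-x^2/6)$ for small $x>0$ gives, for $t\in[M,\epsilon^{-1/2}]$,
\[
\mathbb{P}(T>t)\ \ge\ \frac{2}{\sqrt\pi}\,\frac{1}{\sqrt{\langle M\rangle_t}}\Bigl(1-\tfrac1{3\langle M\rangle_t}\Bigr)\ \ge\ \frac{2}{\sqrt\pi}\,\bigl(1-O(\sqrt\epsilon)\bigr)\Bigl(1-\tfrac1{3M}\Bigr)\,t^{-1/2}\ \ge\ \tfrac12\,t^{-1/2}\ \ge\ \tfrac12\,t^{-1/2-\alpha},
\]
once $M$ is a large enough absolute constant and $\epsilon$ is small (so that $\epsilon^{-1/2}>M$ and the $O(\sqrt\epsilon)$ term is negligible); the last step uses $t\ge M\ge 1$.

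Along this route there is no serious obstacle: the OU drift enters only through the factor $\langle M\rangle_t/t=(e^{2\epsilon t}-1)/(2\epsilon t)=1+O(\epsilon t)$, which is harmless for $t\le\epsilon^{-1/2}$. The points needing care are merely the identification $\{X_s<0\ \forall s\le t\}=\{\sup_{s\le t}M_s<\sqrt2\}$ and the fact that reducing to $\{X_s<0\}$ requires $X_0<0$ (it would fail at $X_0=0$, which is why the proposition is stated with $X_0=-2$). If one preferred not to invoke the explicit OU solution---so as to keep the mechanism transparent and robust for more general rate functions and drifts, as the remark after Theorem~2.1 emphasizes---one would instead compare $X$ with Brownian motion on the event $\{\inf_{s\le t}X_s\ge -H\}$, where the drift is bounded by $\epsilon H$, at the cost of a two-sided barrier estimate and of requiring $\epsilon H t$ to be negligible; balancing the free parameter $H$ against the window $t\le\epsilon^{-1/2}$ is precisely what introduces a small positive loss $\alpha$ in the exponent together with a threshold such as $\alpha<\tfrac16$.
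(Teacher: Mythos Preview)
Your argument is correct and actually yields the sharper bound $\mathbb{P}(T>t)\ge\tfrac12 t^{-1/2}$ on $[M,\epsilon^{-1/2}]$ with an absolute $M$, so the extra $\alpha$ is not needed along your route. The identifications $X_t=e^{-\epsilon t}(-2+\sqrt2\,M_t)$, $\{X_s<0\ \forall s\le t\}=\{\sup_{s\le t}M_s<\sqrt2\}$, the DDS time change with deterministic $\langle M\rangle_t=(e^{2\epsilon t}-1)/(2\epsilon)$, and the reflection/Taylor estimates are all sound.

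The paper takes a different path: it reduces to the hitting time $\hat T=\inf\{t:X_t\ge0\}$ and then compares the \emph{driving Brownian motion} $\sqrt2 B_t^{-2}$ (not the OU process itself) to the two barriers $-1$ and $-x$, establishing in Lemma~\ref{lem37} that the event $\{\bar\tau_{-1}^{-2}>\bar\tau_{-x}^{-2}>t\}$ is incompatible with $\{\hat T<t\}$ for $x<\epsilon^{-1/3}$, $t\le\epsilon^{-1/2}$. One then bounds $\mathbb{P}(\bar\tau_{-1}^{-2}>\bar\tau_{-x}^{-2}>t)$ by gambler's ruin minus a reflection tail, and the choice $x=t^{1/2+\alpha}$ forces $x<\epsilon^{-1/3}$, which is exactly where the restriction $\alpha<\tfrac16$ comes from. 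This is precisely the ``robust'' alternative you sketch in your final paragraph: it never uses the explicit OU representation, only the SDE identity $X_q=\sqrt2 B_q^{-2}-\epsilon\int_0^q X_s\,ds$ together with crude bounds $|X_s|\le x$ on the accumulated drift, and would survive replacing $-\epsilon x$ by any drift $b(x)$ with $|b(x)|\le\epsilon|x|$. Your approach trades that robustness for the exact time-change structure of the linear drift, and is rewarded with the clean $t^{-1/2}$ exponent and no auxiliary barrier parameter.
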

To prove this theorem, we firstly define $\hat{T}=\inf \{t \geq 0,X_t \geq 
0\}$, which is smaller than $T$ by definition. Thus it suffices to prove 
\eqref{pr13} for $\hat{T}$. Define that 
\begin{equation*}
    \bar{\tau}_x^{-2}=\inf \{t \geq 0, \sqrt{2} B_t^{-2}=x\}.
\end{equation*}
The following lemma is crucial for our proof:
\begin{lem}\label{lem37}
For all $2<x<\epsilon^{-\frac{1}{3}}$ and 
$0<t\leq\epsilon^{-\frac{1}{2}}$ we have 
\begin{equation}\label{lem7}
    \mathbb{P}\left( \bar{\tau}_{-1}^{-2} > \Bar{\tau}_{-x}^{-2}>t,\hat{T}< t 
    \right)=0,
\end{equation}
for all sufficiently small $\epsilon \ll 1$.
\end{lem}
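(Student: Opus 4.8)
The plan is to compare the killed OU process with its driving Brownian motion on the event that the Brownian path, run from $-2$, exits the window $[-x,-1]$ on the lower side before time $t$, and to show that such a Brownian excursion forces $X_s^{-2}$ to stay strictly below $0$ for all $s\le t$, which contradicts $\hat T<t$. Concretely, I would first recall the representation $X_s^{-2}=-2-\epsilon\int_0^s X_h^{-2}\,dh+\sqrt2\,B_s^{-2}$ already used in Lemmas \ref{lem34} and \ref{lem36}. On the event $\{\bar\tau_{-1}^{-2}>\bar\tau_{-x}^{-2}>t\}$ the Brownian motion $\sqrt2 B_s^{-2}$ stays in the interval $(-x,-1)$ for every $s\le t$ (it has not yet hit $-1$ from above nor $-x$ from below). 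In particular $\sqrt2 B_s^{-2}<-1<0$ throughout $[0,t]$.

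The heart of the argument is then a deterministic comparison (Gronwall-type) controlling the drift correction $\epsilon\int_0^s X_h^{-2}\,dh$. The key observation, exactly as in the proof of Lemma \ref{lem36} around \eqref{pr5}, is that \emph{as long as} $X_h^{-2}<0$ we have $-\epsilon\int_0^s X_h^{-2}\,dh>0$, so $X_s^{-2}$ could only be pushed \emph{upward} by the drift; we therefore need an upper bound on how large that upward push can be over $[0,t]$. Using $|X_h^{-2}|<x$ on the event (since $\sqrt2 B_h^{-2}\in(-x,-1)$ and, by a bootstrapping argument, $X_h^{-2}$ cannot stray far from $\sqrt2 B_h^{-2}$ before leaving the window), the correction is at most of size $\epsilon\, x\, t$. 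With $x<\epsilon^{-1/3}$ and $t\le\epsilon^{-1/2}$ this is bounded by $\epsilon\cdot\epsilon^{-1/3}\cdot\epsilon^{-1/2}=\epsilon^{1/6}\to0$; in particular for all sufficiently small $\epsilon$ it is $<1$. Hence for every $s\le t$,
\begin{equation*}
    X_s^{-2}=\sqrt2 B_s^{-2}-\epsilon\int_0^s X_h^{-2}\,dh<-1+\epsilon^{1/6}<0,
\end{equation*}
so $X_s^{-2}<0$ on all of $[0,t]$, i.e.\ $\hat T\ge t$. This contradicts $\hat T<t$, so the event in \eqref{lem7} is empty and its probability is $0$.

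The step I expect to be the main obstacle is making the self-consistent bound $|X_h^{-2}|<x$ (for $h$ up to the relevant exit time) fully rigorous: a priori $X$ could in principle be driven below $-x$ by its own drift even while $\sqrt2 B$ stays in $(-x,-1)$. The clean way around this is to run the comparison up to the stopping time $\sigma=\inf\{h: X_h^{-2}=0\}\wedge\inf\{h:|X_h^{-2}|\ge x\}$: before $\sigma$ we have $X_h^{-2}<0$, hence the drift term is positive and $X_h^{-2}>\sqrt2 B_h^{-2}>-x$, so $\sigma$ cannot be caused by $X$ hitting $-x$; thus on the event $\{\bar\tau_{-x}^{-2}>t\}$ either $\sigma>t$ (and the displayed inequality above gives $X_t^{-2}<0$, so actually $\sigma>t$ forces $X_s^{-2}<0$ throughout, i.e.\ $\hat T\ge t$) or $\sigma\le t$ is triggered by $X$ hitting $0$, which already means $\hat T\le\sigma\le t$ — but then tracking back, at that hitting time $\sqrt2 B_\sigma^{-2}=X_\sigma^{-2}+\epsilon\int_0^\sigma X_h^{-2}\,dh\ge 0-0=0$ wait, one must be careful with signs here; the correct bookkeeping is that $\sqrt2 B_\sigma^{-2}=\epsilon\int_0^\sigma X_h^{-2}\,dh\le 0$ contradicts $\sqrt2 B_\sigma^{-2}<-1$ being impossible only if $\sigma$ is the $X$-hitting-$0$ time, giving $\sqrt2 B_\sigma^{-2}\le 0$, still consistent, so instead one rules this out by noting $\sqrt2 B_h^{-2}<-1$ for all $h\le t$ forces $X_h^{-2}=\sqrt2 B_h^{-2}-\epsilon\int_0^h X<-1+\epsilon^{1/6}<0$ by the very same estimate, so $\sigma$ cannot be the hitting-$0$ time either. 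Hence $\sigma>t$, $X_s^{-2}<0$ on $[0,t]$, $\hat T\ge t$, and \eqref{lem7} follows. Once this stopping-time bootstrap is set up carefully the rest is the elementary estimate $\epsilon x t\le\epsilon^{1/6}$.
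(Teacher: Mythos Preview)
Your proposal is correct and follows essentially the same argument as the paper. The paper introduces $\bar T=\inf\{t:X_t<-x\}$ and splits the event into two cases according to whether $\hat T<\bar T$ (their $I_1$) or $\bar T<\hat T$ (their $I_2$); your stopping time $\sigma=\hat T\wedge\bar T$ packages the same dichotomy, and the two impossibilities you derive---$\sigma$ cannot be the $X$-hits-$0$ time because $\sqrt2 B_h^{-2}<-1$ and the drift correction is at most $\epsilon xt\le\epsilon^{1/6}$, and $\sigma$ cannot be the $X$-hits-$(-x)$ time because $X_h>\sqrt2 B_h^{-2}>-x$ while $X_h<0$---are exactly the paper's $I_1=0$ and $I_2=0$ computations. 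Two small cleanups: in your displayed representation you should drop the extra $-2$, since in the paper's convention $\sqrt2 B_s^{-2}$ already starts at $-2$ (so $X_s^{-2}=\sqrt2 B_s^{-2}-\epsilon\int_0^s X_h^{-2}\,dh$); and your bootstrap paragraph would read more smoothly if you state the two exclusions in the order (i) $X$ cannot reach $-x$ before $\sigma\wedge t$ (positivity of the drift term while $X<0$), then (ii) with $|X_h|<x$ in hand, $X$ cannot reach $0$ before $\sigma\wedge t$ (the $\epsilon^{1/6}$ bound), rather than discovering the right order mid-sentence.
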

\begin{proof}
Let $\bar{T}=\inf \{t:X_t<-x\}$. Then
\begin{equation*}
    \mathbb{P}\left(\bar{\tau}_{-1}^{-2}>\bar{\tau}_{-x}^{-2}>t,
    \hat{T}<t\right) = I_1 + I_2.
\end{equation*}
We denote
\begin{equation*}
    I_1 = \mathbb{P}\left(\bar{\tau}_{-1}^{-2}>\bar{\tau}_{-x}^{-2}>t,
    \hat{T}<t,\bar{T}>\hat{T}\right),\quad 
    I_2 = \mathbb{P}\left(\bar{\tau}_{-1}^{-2}>\bar{\tau}_{-x}^{-2}>t,
    \hat{T}<t,\bar{T}<\hat{T}\right).
\end{equation*}
For $I_1$, we have
\begin{equation*}
    \left\{\bar{\tau}_{-1}^{-2}>\bar{\tau}_{-x}^{-2}>t,\hat{T}<t,
    \bar{T}>\hat{T}\right\} = \bigcup_{\substack{q \in \mathbb{Q}^+ q < 
    t}}\left\{X_q>0,\bar{T}>q,\bar{\tau}_{-1}^{-2}>\bar{\tau}_{-x}^{-2}>t\right\}.
\end{equation*}
However, since we almost surely (a.s.) have
\begin{equation*}
    X_q = \sqrt{2} B_q^{-2}-\epsilon \int_{0}^{q} X_sds.
\end{equation*}
So in event 
$A_q^1=\{X_q>0,\bar{T}>q,\bar{\tau}_{-1}^{-2}>\bar{\tau}_{-x}^{-2}>t\}$, 
a.s. we have
\begin{align*}
    X_q &={} \sqrt{2} B_q^{-2}-\epsilon \int_{0}^{q} X_sds
         \leq -1 - \epsilon \int_{0}^{q}X_s ds
         \leq{} -1 + \epsilon \int_{0}^{t} -X_s ds \\
        &\leq{} -1+ {\epsilon}^{-\frac{1}{6}}<0,
\end{align*}
which contradicts with its definition $X_q>0$ and implies 		
$\mathbb{P}(A_q^1) \equiv 0$ and thus $I_1 = 0$.
            
As for $I_2$, similarly
\begin{equation*}
    \{\bar{\tau}_{-1}^{-2}>\bar{\tau}_{-x}^{-2}>t,\hat{T}<t,
    \bar{T}<\hat{T}\}= \bigcup_{\substack{q \in \mathbb{Q}^+ q < 
    t}}\left\{X_q<-x,\hat{T}>q,\bar{\tau}_{-1}^{-2}>\bar{\tau}_{-x}^{-2}>t\right\}.
\end{equation*}
In the event $A_q^2 = 
\{X_q<-x,\hat{T}>q,\bar{\tau}_{-1}^{-2}>\bar{\tau}_{-x}^{-2}>t\}$, we know 
that $\bar{\tau}_{-x}^{-2}>t$ and $q<t$. As a result, we have  
$\sqrt{2} B_q^{-2} > -x$. So we a.s. have
\begin{equation*}
    X_q = \sqrt{2} B_q^{-2}-\epsilon \int_{0}^{q} X_sds \geq -x - \epsilon \int_{0}^{q} X_sds > -x,
\end{equation*}
which contradict with its own definition and implies $\mathbb{P}(A_q^2) 
\equiv 0$ and thus $I_2=0$. Together we conclude the proof of Lemma \ref{lem37}.
\end{proof}
\textbf{Proof of Proposition \ref{thm32}}:
    For $2^{\frac{1}{\frac{1}{2}+\alpha}}<t<{\epsilon}^{-\frac{1}{2}}$, 
    we let $x= 
    t^{\frac{1}{2}+\alpha}$ which satisfies 
    $2<x<{\epsilon}^{-\frac{1}{3}}$. So that with 
    the help of 
    Lemma \ref{lem37} we could 
    get a lower bound of $\mathbb{P}(\hat{T}>t)$, which is
    \begin{equation*}
        \mathbb{P}\left(\hat{T}>t\right) \geq 
        \mathbb{P}\left(\bar{\tau}_{-1}^{-2}>\bar{\tau}_{-x}^{-2}>t\right).
    \end{equation*}
    Besides from the fact that $\mathbb{P}(\bar{\tau}_{-1}^{-2}>\bar{\tau}_{-x}^{-2}) = 
    \mathbb{P}(\bar{\tau}_{-1}^{-2}>\bar{\tau}_{-x}^{-2}>t)+\mathbb{P}
    (\bar{\tau}_{-1}^{-2}>\bar{\tau}_{-x}^{-2} \leq t)$,
     we have the following inequality			
    \begin{equation*}	
    \mathbb{P}\left(\bar{\tau}_{-1}^{-2}>\bar{\tau}_{-x}^{-2}>t\right)	\geq 
    \mathbb{P}\left(\bar{\tau}_{-1}^{-2}>\bar{\tau}_{-x}^{-2}\right)-\mathbb{P}\left(\bar{\tau}_{-x}^{-2}
    \leq t\right) = J_1 - J_2.
    \end{equation*}
        Then $J_1=\frac{1}{x-1}$ by Optional Stopping Theorem. And again, 
        by reflection principle
    \begin{equation*}
            \mathbb{P}\left(\bar{\tau}_{-x}^{-2} \leq t\right)= 
            \mathbb{P}\left(\bar{\tau}_{x-2}^{0} \leq t\right) 
            = 2\mathbb{P}\left(\sqrt{2} B_t > t^{\frac{1}{2}+\alpha}-2\right) \leq \exp(-t^{2\alpha}).
    \end{equation*}
        Let $M<\infty$ s.t. $\exp(-t^{2\alpha})<\frac{1}{2}t^{-\frac{1}{2}-\alpha} \quad \forall t \geq M$. We conclude the proof of Theorem \ref{thm32}.

\section{Numerical simulations} \label{sec:5}
Several numerical tests are presented to 
verify the main results, and a few numerical explorations are carried out with more general rate functions. The algorithm is shown in subsection \ref{subsection1} and the case when $\epsilon=0$ is considered in subsection \ref{subsection2}. Then the transitional phenomena with nonzero $\epsilon$ are shown in subsection \ref{subsection3} and \ref{subsection4}, for $\Lambda$ being respectively a piece-wise constant function and smooth functions with a fast transition at $0$.

\subsection{The algorithm}\label{subsection1}
 Each sample is represented by its position $X^n$ and the run time $\tau^n$, where the superscript $n$ is the index of the sample. For each time step, $X^n$ is updated by the Euler-Maruyama method. 
 %The integral of $\Lambda(X^n)$ in \eqref{eq:1passagetime} is updated by Euler method.  
 In all simulations in the subsequent part, $10^5$ samples are tracked and each sample evolves by the algorithm in Section \ref{subsection1} with $\Delta t=10^{-3}$. We denote the numerical approximations of $X^n(k\Delta t)$ and $I^n(k\Delta t)$ by $X_k^n$ and $I_k^n$, respectively. Here we show the details of the algorithm.

\textbf{Initialization.}\quad  The initial values  $X^n$, $\tau^n$ and $I^n$ for all samples are $0$. Set the value of $\epsilon$ in \eqref{1} and the function $\Lambda$. We generate a series of independent random numbers $\Gamma \sim \exp(1)$ and a null matrix $D$ to store the waiting time of all samples. 

\textbf{Time evolution.}\quad For each time step $k$ (=1 initially), we perform the following calculations repeatedly until the termination condition is satisfied:
\begin{itemize}
    \item[1)] Update sample position. Generate a random number denoted by $\Delta B_t$ using the normal distribution $N(0,\Delta t)$. Use the Euler-Maruyama method to update $X^n$
    \begin{equation*}
        X_{k+1}^n = X_k^n - \epsilon X_k^n \Delta t + \sqrt{2} \Delta B_t.
    \end{equation*}
    \item[2)] Update $I^n$. Update the value of $I^n$ by %Euler method.
    \begin{equation*}
        I_{k+1}^n = I_k^n + \Lambda(X_{k+1}^n)\Delta t.
    \end{equation*}
    \item[3)] Update $\tau^n$. Set $\tau^n_{k+1} = \tau^n_{k}+\Delta t$.
    \item[4)] If $I_k^n \geq \Gamma$, store the waiting time for particle $n$ by setting $D=[D,\tau^n_{k+1}]$ and terminate the loop. Otherwise, set $k \gets k+1$, and go back to Step 1).
\end{itemize}

%\begin{algorithm}
%    \caption{} 
%    \label{alg1} 
%    \begin{algorithmic}
%        \REQUIRE For the $n$-th particle
%        \STATE initial position $X^n$; sample location $X^n$; the integral $I^n$; total time $T_t$; time step $\Delta t$; 
%        \STATE exponential distribution rate $\lambda_{\Gamma}$; Drift parameter $\epsilon$; data null vector $D$; Duration time $\tau$;
%        \STATE Generate one exponential distribution random number $\Gamma$ with the rate $\lambda_{\Gamma}$
 %       \REPEAT
 %       \STATE Generate one normally distributed random number $\Delta B_t$ from $N(0,\Delta t)$
 %       \STATE $X^n \gets X^n + \sqrt{2} \Delta B_t + \epsilon X^n \Delta t$
 %       \STATE $I^n \gets I^n + \Lambda(X^n)\Delta t$
 %       \STATE $\tau^n \gets \tau^n + \Delta t$
 %       \UNTIL $I^n \geq \Gamma$
 %       \STATE $D \gets [D,\tau^n]$
 %   \end{algorithmic}
%\end{algorithm}

\subsection{The case when $\epsilon=0$.}\label{subsection2}
In this case, $X_t = B_t$ is a Brownian motion. The rate function $\Lambda$ is chosen to be the piece-wise constant function as in \eqref{Lambdaxspecial}. We evolve each sample by the algorithm in Section \ref{subsection1}.

After getting the $10^5$ waiting times $\{\tau^n\}_{n=1}^{10^5}$ stored in $D$, letting $T_t=\max_{n=1,\cdots,10^{5}}\{\tau^n\}$, we divide $[0,T_t]$ into $N_t$ intervals of the same size $100$ and count the number of $\tau^n$ that fall in each interval. The number of $\tau^n$ in the $i$-th interval is denoted by $N_i$. For $i=1,\cdots,N_t$, we plot $(\log[100(i-1)+1],\log[N_i/10^5])$ in Fig. \ref{fig_BM_powerlaw} which is the PDF of the waiting time distribution in a log-log scale. The black dashed line in Fig.\ref{fig_BM_powerlaw} is a straight line whose slope is $-1.5$, it fits well with the PDF of $\tau$ in the interval $[4,7]$ which is consistent with the theoretical prediction that $n(t)$ decays as $t^{-1.5}$ when $t$ is large enough.

\begin{figure}[!htbp]  
      \centering  
      \includegraphics[width=0.5\textwidth]{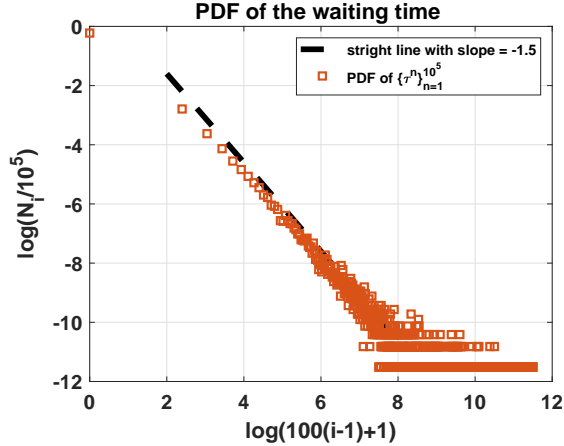} 
      \caption{PDF of the waiting time distribution (squares). We take $\Lambda$ in \eqref{Lambdaxspecial} and run $10^5$ samples. The slope of the straight line is $-1.5$.}
      \label{fig_BM_powerlaw}
\end{figure}

%  \begin{algorithm}
%     \caption{} 
%     \label{algBM} 
%     \begin{algorithmic}
%         \REQUIRE For the $n$-th particle
%         \STATE initial position $X^n$; sample location $X^n$; the integral $I^n$; total time $T_t$; time step $\Delta t$; 
%         \STATE Drift parameter $\epsilon$; data null vector $D$; Duration time $\tau$;
%         \REPEAT
%         \STATE Generate one random number $\Delta B_t$ according to the normal distribution $N(0,\Delta t)$
%         \STATE $X^n \gets X^n + \Delta B_t$
%         \IF{$X^n \geq 0$}
%         \STATE $\tau \gets \tau + \Delta t$
%         \ENDIF
%         \UNTIL $\tau \geq 1$
%         \STATE $D \gets [D,\tau]$
%     \end{algorithmic}
% \end{algorithm}

\subsection{The case when $\epsilon\neq 0$ and $\Lambda$ being piece-wise constant}\label{subsection3}
Let the rate function $\Lambda$ be a piece-wise function as in \eqref{Lambdaxspecial}, then $I^n$ is the duration time when $X^n$ is greater than $0$. We take different values of $\epsilon$ such that $\epsilon=4.0000 \times 10^{-3}, 1.0000\times10^{-3},\, 2.5000\times10^{-4},\, 6.2500\times10^{-5},\, 1.5625 \times 10^{-5}$ and get $10^5$ waiting times $\{\tau^n\}_{n=1}^{10^5}$ for every $\epsilon$. 

Similar to the test in the previous subsection, we obtain $\{\tau^n\}_{n=1}^{10^5}$ and $\{N_i\}_{i=1}^{N_t}$ for each $\epsilon$. Then $S_{Ni} = \sum_{j=i}^{N_t} N_j$ is the number of $\tau^n$'s that fall in the interval $[100(i-1),T_t]$($i=1,...,N_t$). In Fig.\ref{figOU5driftspowerlaw}, we plot $(\log[100(i-1) + 1],\log[S_{Ni}/10^5])$ which is the cumulative density function (CDF) of the waiting time distribution in a log-log scale. We can observe from Fig.\ref{figOU5driftspowerlaw} that all waiting time distributions exhibit transitions from a power law decay to a fast damping tail. %As $\epsilon$ decreases, the interval that exhibits power law decay is longer. 
We observe that as $\epsilon$ decreases, the regimes that exhibit power-law decay in the waiting time extend in size accordingly. 
%  \begin{algorithm} 
%     \caption{} 
%     \label{algpiecewise} 
%     \begin{algorithmic}
%         \REQUIRE  initial location $X_0$; sample location $X$; total time 
%         $T$; time interval $n$; exponential distribution rate $\lambda_{\Gamma}$;
%         normal distribution $N(0,1)$;
%         time step $\Delta t$; sample number $N$; drift parameter $\epsilon$;
%         data null vector $D$; duration $\tau$;
%         \REPEAT
%         \STATE Generate one exponential distribution random number $\Gamma$ with the rate $\lambda_{\Gamma}$
%         \STATE $X \gets X + \sqrt{\Delta t} N(0,2) + c X \Delta t$
%         \IF{$X \geq 0$}
%         \STATE $\tau \gets \tau + \Delta t$
%         \ENDIF
%         \UNTIL $I \geq 1$
%         \STATE $D \gets [D,\tau]$
%     \end{algorithmic} 
% \end{algorithm} 

\begin{figure}[ht]  
          \centering  
          \includegraphics[width=0.5\linewidth]{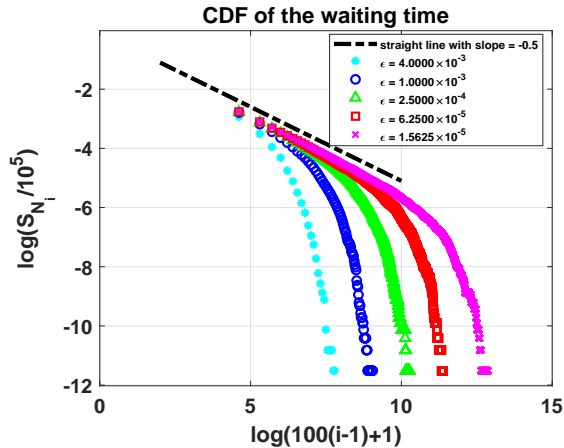} 
          \caption{CDF of the waiting time for different $\epsilon$. We run $10^5$ samples for every $\epsilon$. $\epsilon = 4.0000\times10^{-3}$ (asterisks), $\epsilon = 1.0000\times10^{-3}$ (circles), $\epsilon = 2.5000\times10^{-4}$ (upward triangles), $\epsilon = 6.2500\times10^{-5}$ (squares), $\epsilon = 1.5625\times10^{-5}$ (crosses). The slope of the reference straight line is $-0.5$.}
          \label{figOU5driftspowerlaw}
\end{figure}
\begin{figure}[htbp]
\centering
    \subfloat[]{\includegraphics
    [width = 0.5\textwidth]{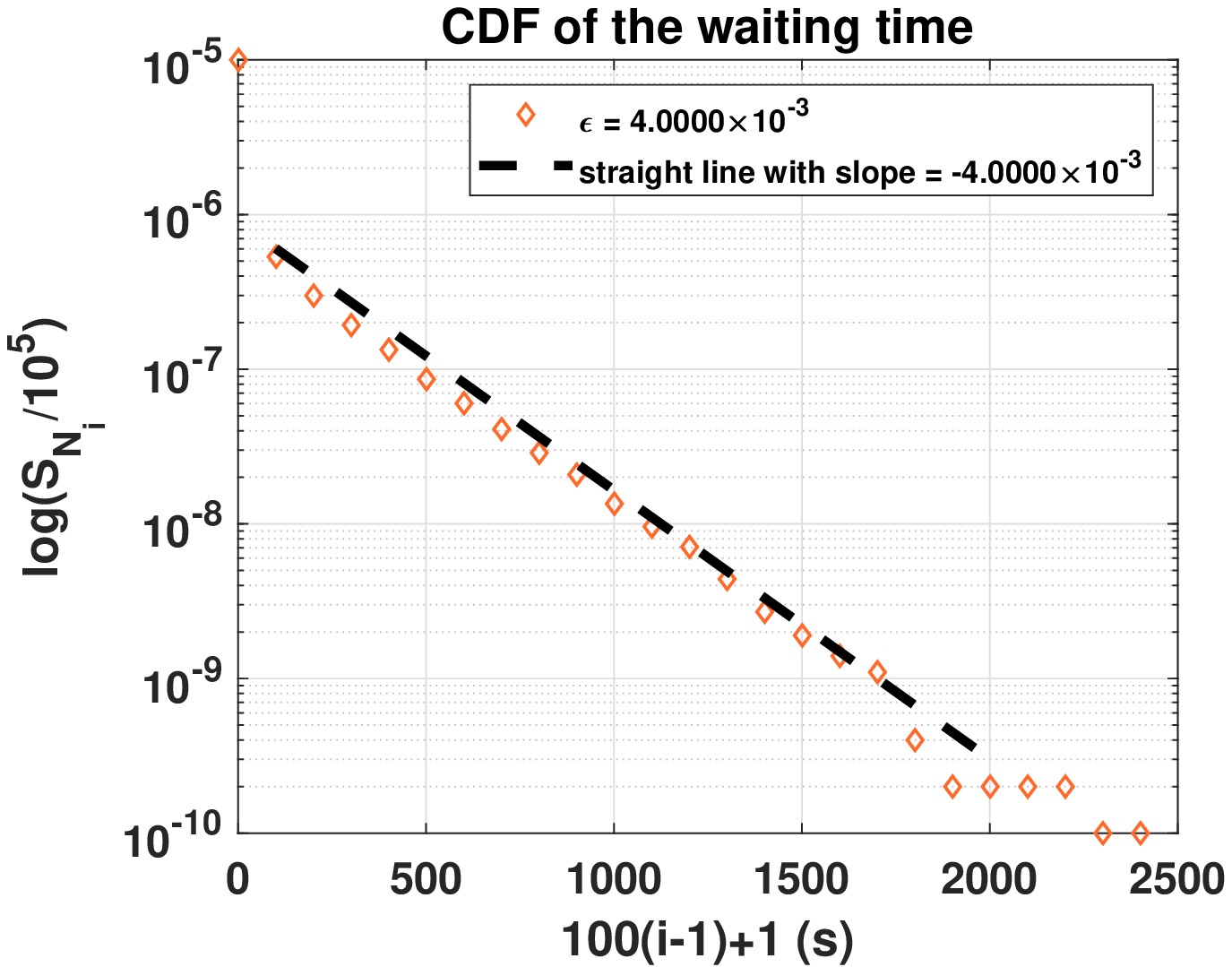}}
    \hfill
    \subfloat[]{\includegraphics
    [width = 0.5\textwidth]{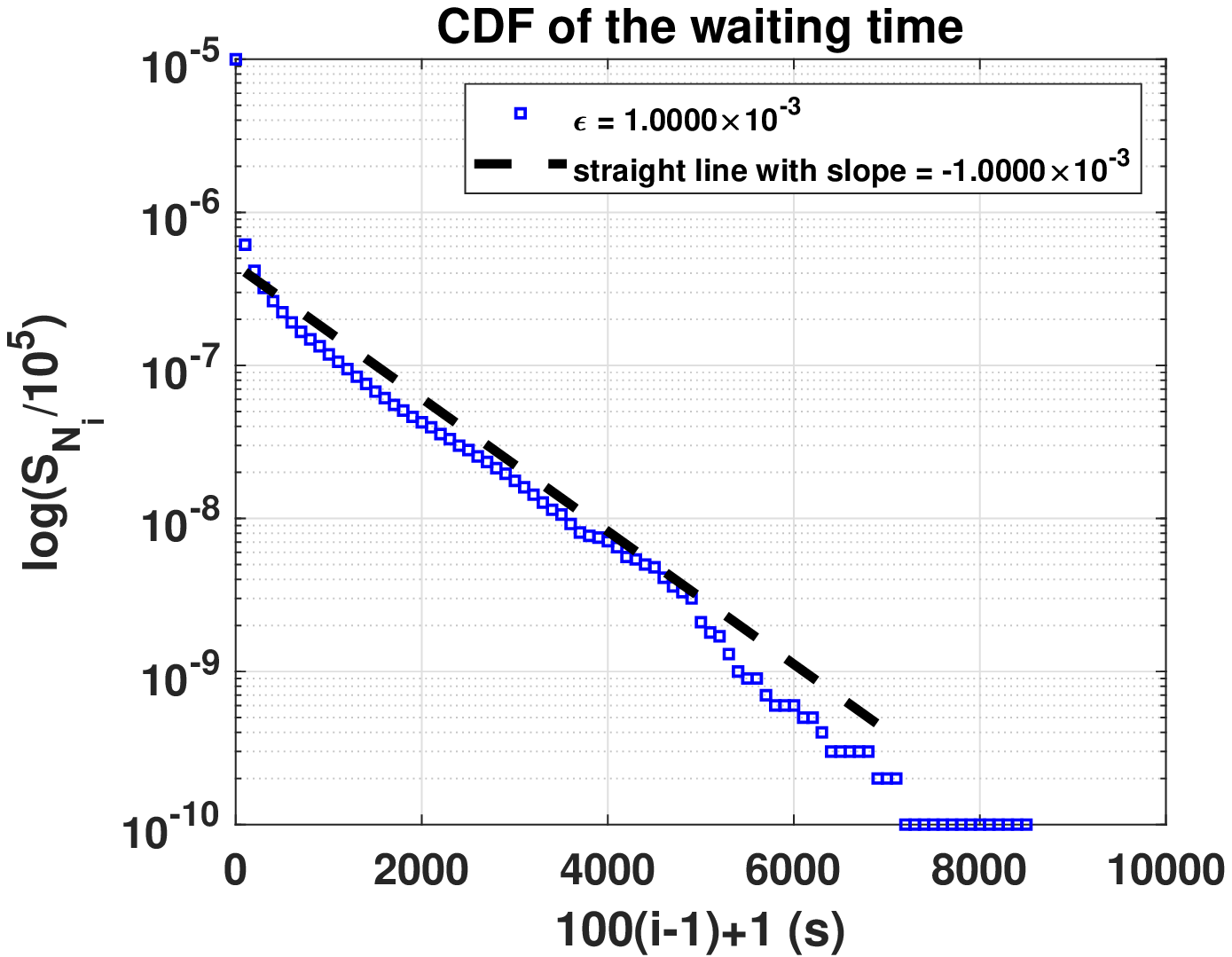}}
    \hfill
    \\
    \subfloat[]{\includegraphics
    [width = 0.5\textwidth]{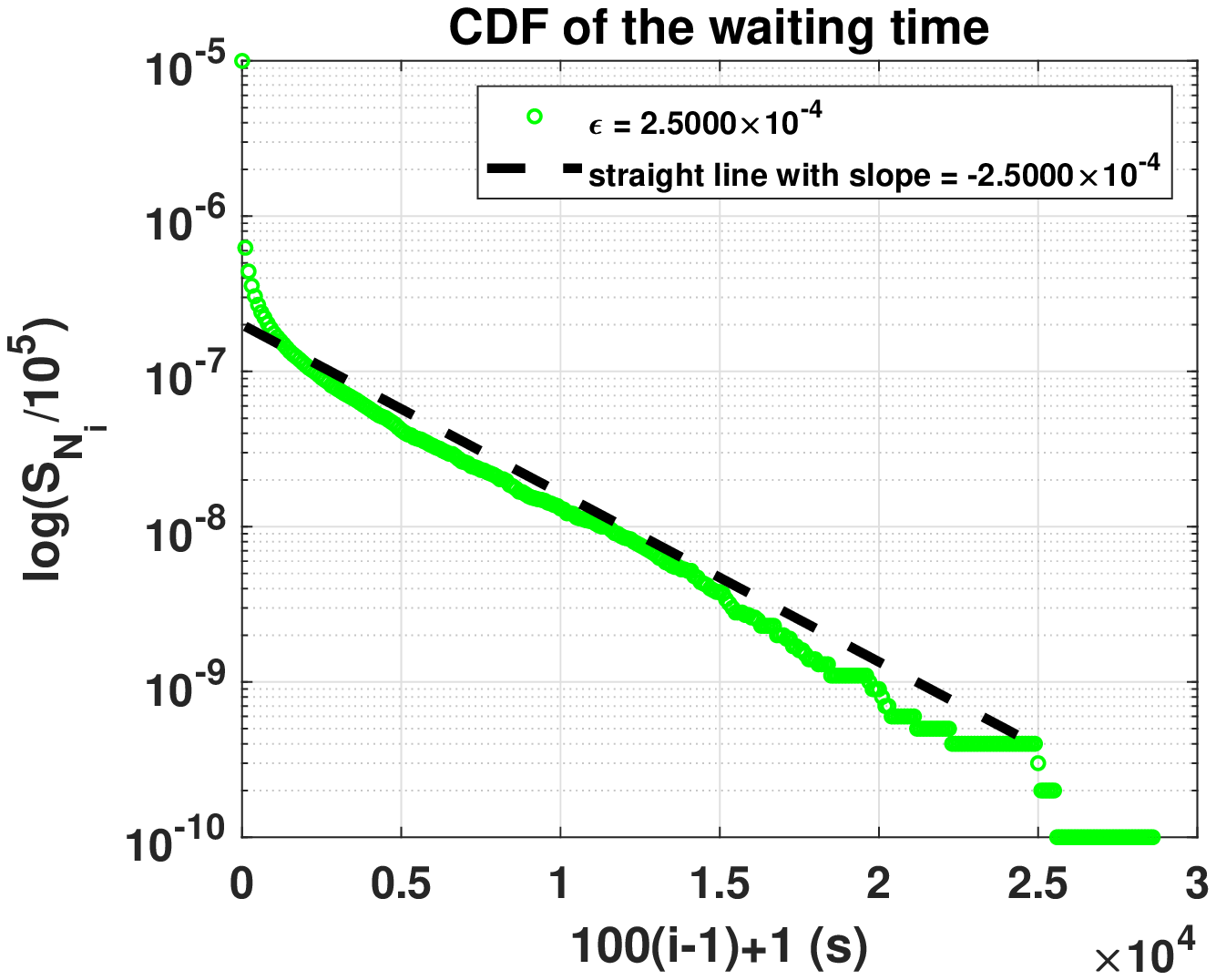}}
    \hfill
    \subfloat[]{\includegraphics
    [width = 0.5\textwidth]{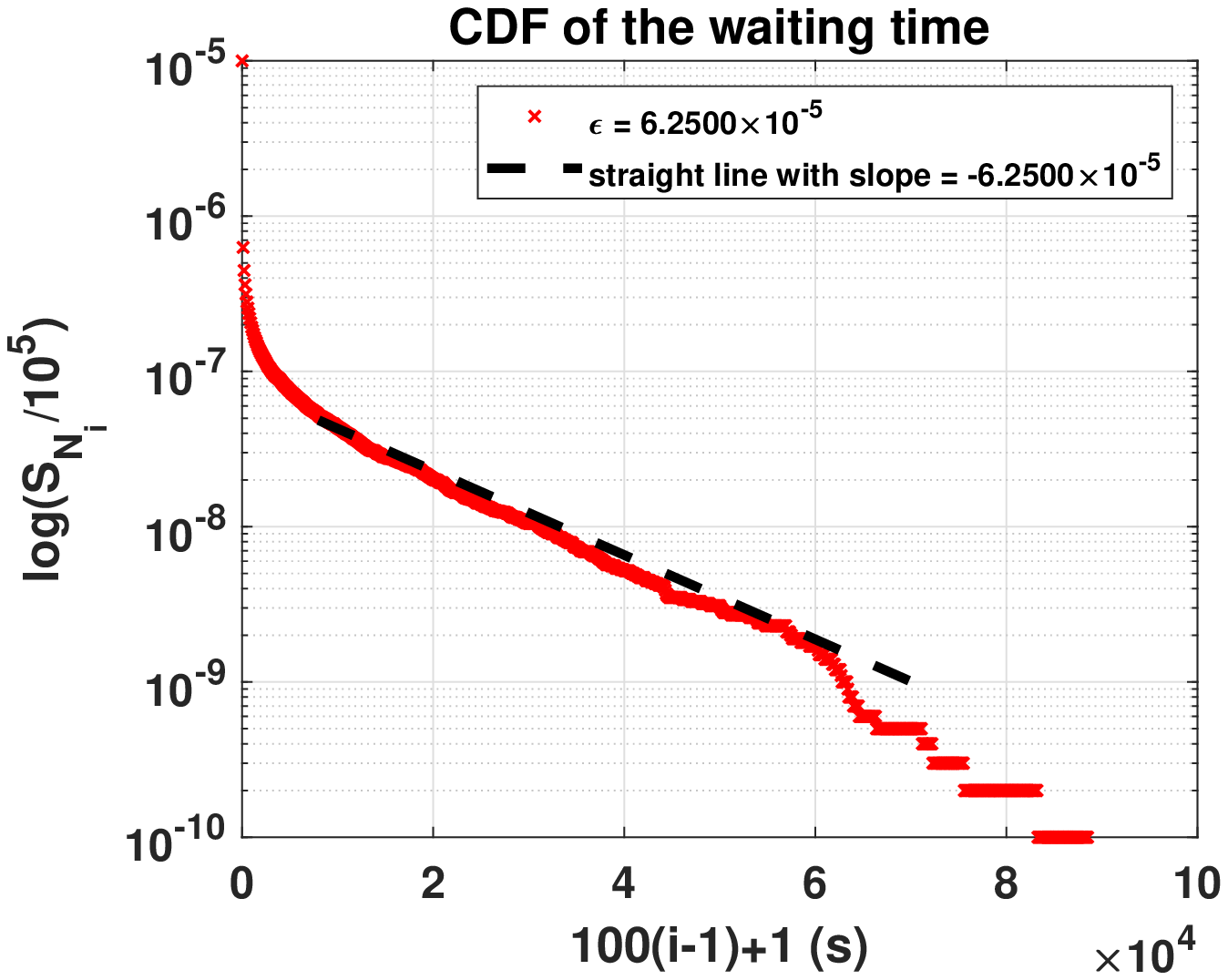}}
    \hfill
    \\
    \subfloat[]{\includegraphics
    [width = 0.5\textwidth]{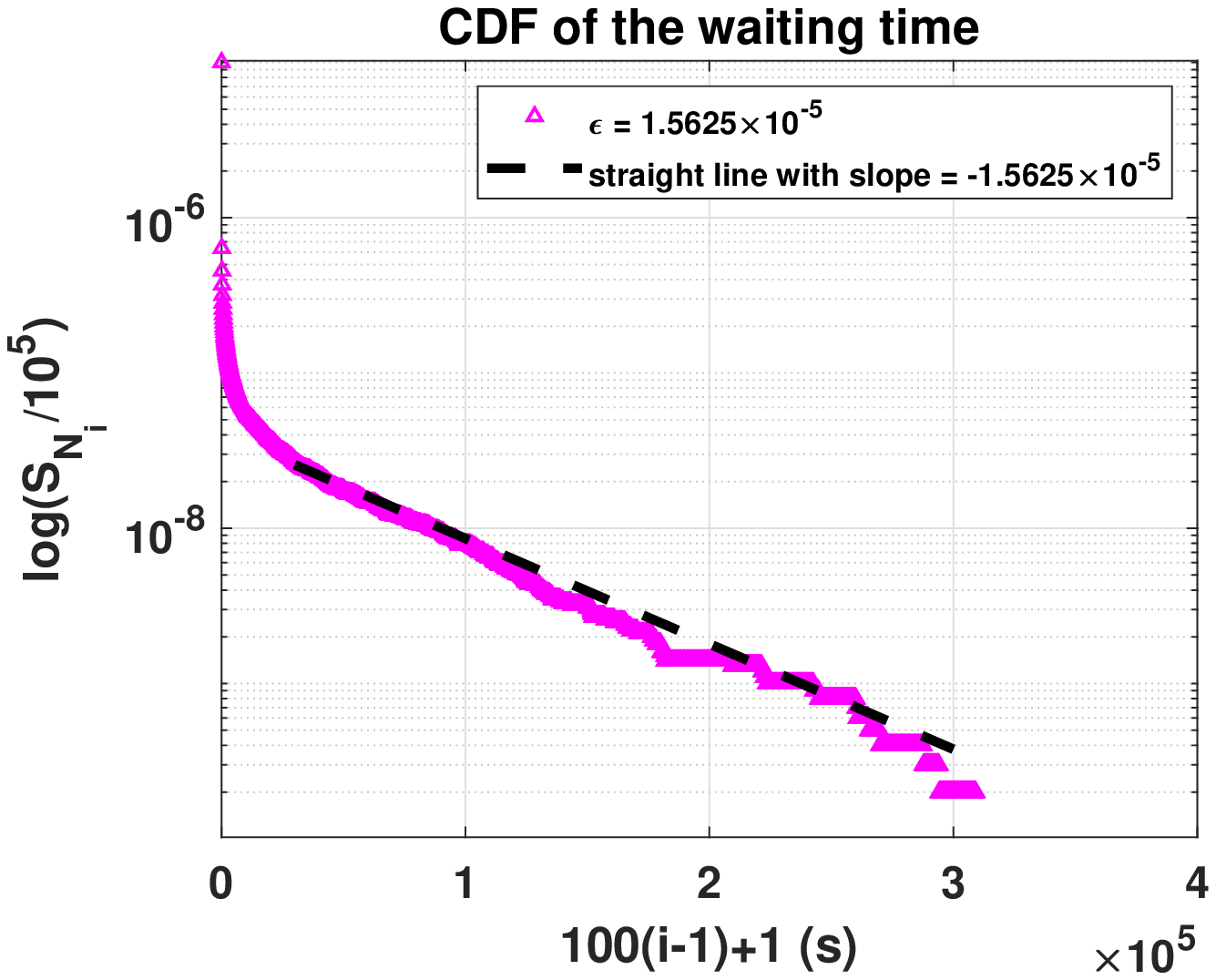}} 
    \caption{CDF of the waiting time for different $\epsilon$. (a) $\epsilon = 4.0000\times10^{-3}$ (orange diamonds), (b) $\epsilon = 1.0000\times10^{-3}$ (blue squares), (c) $\epsilon = 2.5000\times10^{-4}$ (green circles), (d) $\epsilon = 6.2500\times10^{-5}$ (red crosses), (e) $\epsilon = 1.5625\times10^{-5}$ (pink upward triangles).}
    \label{figOU5driftsexp}
\end{figure}
%\textcolor{red}{ Moreover, the colored chain-dotted line decreases rapidly at the tail part and there exists a transition from the power-law decay to a faster decrease. To show the faster decrease more visually, we plot Fig.\ref{figOU5driftsexp}.}

To examine the tail distribution in more detail, we visualize the waiting time distribution in a linear-log scale as shown in Fig.\ref{figOU5driftsexp}. We can observe that for all $\epsilon$, the tail parts can be fitted by straight lines, which indicates that the CDFs decay exponentially fast at the tail parts. The slopes and transition points are listed in Table \ref{table1}. As can be seen in the Table \ref{table1}, the absolute values of the slopes are the same as $\epsilon$. Besides, the transition points are in the same order as $\epsilon^{-1}$ for all five $\epsilon$, which are larger than $\epsilon^{-\frac{1}{2}}$. This numerical observation suggests that the time interval of the power law decay given in part (a) of Theorem \ref{mainthm1} is not optimal.
 \begin{table}[!htbp]
  \begin{center}
    \caption{The slopes and transition points for different $\epsilon$.}
    \label{table1}
    \begin{tabular}{|c|c|c|c|c|} % <-- Alignments: 1st column left, 2nd middle and 3rd right, with vertical lines in between
    \hline
      \textbf{$\epsilon$} & \textbf{slope} & \textbf{$\epsilon^{-\frac{1}{2}}$} & \textbf{$\epsilon^{-1}$} & \textbf{transition point}\\
      \hline
      $1.5625\times10^{-5}$ & $-1.5625\times10^{-5}$ & $2.5298\times10^2$ & $6.4000\times10^4$ & $3.3000\times10^4$\\
      \hline
      $6.2500\times10^{-5}$ & $-6.2500\times10^{-5}$ & $1.2649\times10^2$ & $1.6000\times10^4$ & $9.6000\times10^3$\\
      \hline
      $2.5000\times10^{-4}$ & $-2.5000\times10^{-4}$ & $6.3246\times10^1$ & $4.0000\times10^3$ & $2.2000\times10^3$\\
      \hline
      $1.0000\times10^{-3}$ & $-1.0000\times10^{-3}$ & $3.1622\times10^1$ & $1.0000\times10^3$ & $7.0000\times10^2$\\
      \hline
      $4.0000\times10^{-3}$ & $-4.0000\times10^{-3}$ & $1.5811\times10^1$ & $2.5000\times10^2$ & $1.0000\times10^2$\\
      \hline
    \end{tabular}
  \end{center}
\end{table}
% As in Fig.\ref{figOU5driftsexp}, for different $\epsilon$, the slopes of the straight lines are respectively $-4\times10^{-3}$, $-10^{-3}$, $-2.5\times10^{-4}$, $-7.5\times10^{-5}$ and $-3.125\times10^{-5}$ in the interval $[200s,2000s]$. In Fig.\ref{figOU5driftsexp}(b), the blue line fits well with the black dotted line whose slope is $-10^{-3}$ in [1000s,5000s]. As seen in Fig.\ref{figOU5driftsexp}(c), in $[1000s,25000s]$ the green line fits well with the straight dotted line with slope $-2.5\times10^{-4}$. The red line goes down roughly like the straight dotted line with slope $-7.5\times10^{-5}$ in $[5000s,45000s]$ in Fig.\ref{figOU5driftsexp}(d). In Fig.\ref{figOU5driftsexp}(e), the pink line decays roughly like the straight dotted line with slope $-3.125\times10^{-5}$ in $[10000s,70000s]$. Fig.\ref{figOU5driftsexp} indicates tail probability of the waiting time has an exponential decay in the tail

\subsection{Different $\epsilon$ for $\Lambda$ being an inverse tangent function}\label{subsection4}

In this subsection, we test with three rate functions $\Lambda(x)$ which are inverse tangent functions with different stiffness. The three rate functions  are
$$\Lambda_1(x) = \arctan(10^5x) + \frac{\pi}{2},\quad \Lambda_2(x) = \arctan(10^3x) + \frac{\pi}{2},\quad \Lambda_3(x) = \arctan(10x) + \frac{\pi}{2},$$ which are plotted in Fig.\ref{fig_3atanfcns}. Clearly, $\Lambda_1$ has the sharpest transition from $0$ to $\pi$, while $\Lambda_2$ increases less sharply and $\Lambda_3$ has the mildest transition. Besides, there is a positive lower bound $\frac{\pi}{2}$ when $x>0$ for all these three cases. However, we notice that these rate functions do not satisfy \eqref{Lambdax} since they do not strictly take $0$ value when $x<0$, and their properties are beyond our theoretical analysis. But these rate functions 'morally' satisfy the modeling principle as they are almost $0$ for $x<0$ up to a removal of a small neighborhood around $x=0$. Thus, it is worth testing if these models also exhibit similar transitional behavior in terms of the waiting time distribution.

\begin{figure}[ht]  
          \centering  
          \includegraphics[width=0.5\linewidth]{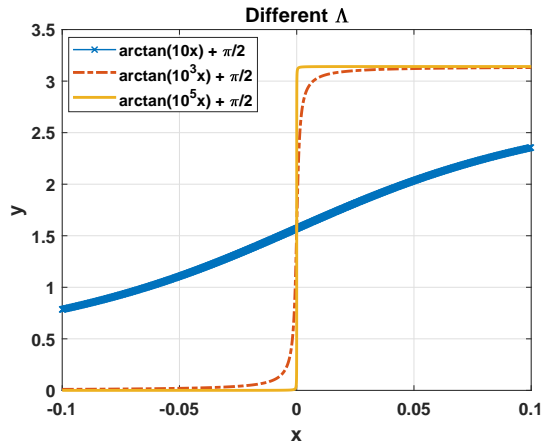} 
          \caption{The curves of $\Lambda_i(x)$ ($i=1, 2, 3$). $\Lambda_1(x)$ (yellow solid line), $\Lambda_2(x)$ (red dotted line), $\Lambda_3(x)$ (blue line).}
          \label{fig_3atanfcns}
\end{figure}

\begin{figure}[ht]  
    \centering  
    \includegraphics[width=0.5\linewidth]{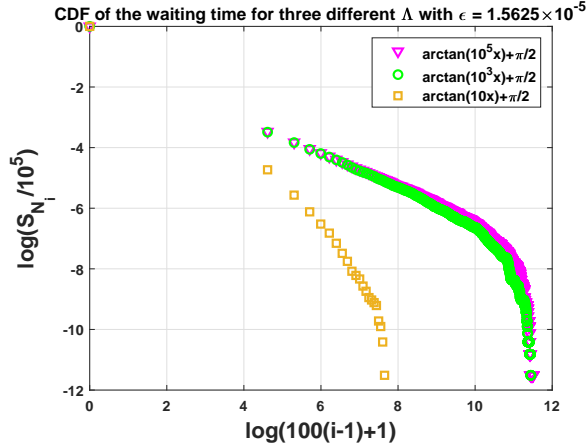} 
    \caption{CDF of the waiting time for $\Lambda_i$ ($i=1, 2, 3$). $\Lambda_1$ (pink downward triangles), $\Lambda_2$ (green circles), $\Lambda_3$ (yellow squares). We take $\epsilon = 1.5625\times10^{-5}$ and run $10^5$ samples for each $\Lambda_i$ ($i=1, 2, 3$). The green and the pink symbols have a transition from power-law decay to a faster decay. The transition for the yellow symbols is not obvious.}
    \label{fig_3atanfcns_powerlaw}
\end{figure}

We take $\epsilon = 1.5625 \times 10^{-5}$ and run $10^5$ samples for the three different $\Lambda(x)$. Similar as in subsection \ref{subsection3}, $10^5$ waiting times $\{\tau^n\}_{n=1}^{10^5}$ as well as $\{N_i\}_{i=1}^{N_t}$ and $\{S_{N_i}\}_{i=1}^{N_t}$ are obtained for each $\Lambda(x)$. We plot ($\log(100(i-1)+1)$, $\log[S_{N_i}/10^5]$) in Fig. \ref{fig_3atanfcns_powerlaw} which is the CDF of the waiting time distribution in a log-log scale. As shown in Fig.\ref{fig_3atanfcns_powerlaw}, the transition from a power-law decay to a more rapid decay is obvious when using $\Lambda_1(x)$ or $\Lambda_2(x)$, but is less clear when using $\Lambda_3(x)$. These results seem to suggest that \eqref{Lambdax} is not a necessary condition for the transitional behavior and a more comprehensive characterization calls for future studies. 

%\textcolor{red}{In Fig.\ref{fig_atanx_5exp}, we plot $(100(i-1) + 1,\log[S_{Ni}/10^5])$ to show the tail distribution of $\Lambda_3(x) = \arctan(x)+\frac{\pi}{2}$ with five different $\epsilon$. In Fig.\ref{fig_atanx_5exp}(a), the solid line fits well with the dashed line $t^{-2.3}$ in the region $[0,60]$. As seen in Fig.\ref{fig_atanx_5exp}(b), the solid line fits well with the dashed line $t^{-2.35}$ in $[0,65]$. In the interval $[0,40]$, the solid line fits well with the dashed line $t^{-2.4}$ in Fig.\ref{fig_atanx_5exp}(c). Fig.\ref{fig_atanx_5exp}(d) displays that the solid line and the dashed line $t^{-2.4}$ are almost coincident in the interval $[0,55]$. As illustrated in Fig.\ref{fig_atanx_5exp}(e), in the interval $[0,45]$, the solid line and the dotted line $t^{-2.45}$ nearly overlap. This phenomenon indicates that the CDF of the waiting time has a power-law decay for $\Lambda(x) = \arctan(x)+\frac{\pi}{2}$.Moreover, in the tail of the CDF of the waiting time, there is no obvious exponential decline.}

\section{Conclusion and discussions}

In this work, we have explored the transitional behavior of the waiting time distribution of a state-dependent jump model, which may be interpreted as an underlying principle for the transitory anomalous diffusion at the macroscopic level.   

The two approaches we have presented naturally complement each other. The asymptotic analysis leads to a fully specified description of the leading order behavior but its application is limited to a few cases. The probability method works for more generic parameter settings, but it only provides estimates in terms of the cumulative density functions, and there is no obvious way to justify the optimality of the derived bounds just by themselves.  It is worth pointing out that, for simplicity of analysis, we have focused on the rate model \eqref{Lambdax} only to demonstrate the key components of the internal state causing the transitional phenomenon, but a series of rate models beyond Assumption \eqref{Lambdax}  have been tested in our numerical experiments as well. And it is interesting to investigate in a more quantitative way how the transition rate affects the waiting time distribution and whether possible phase transitions exist.

This work also establishes a theoretical platform and a practical modeling methodology for incorporating transitional behavior in more complicated physical or biological systems. For example, we may study moving agents which have switching models with transitional waiting times in the future.  

\section*{Acknowledgment}
Z. X. and M. T are partially supported by NSFC 11871340, NSFC12031013, Shanghai pilot innovation project 21JC1403500. Y.Z. was supported by NSFC Tianyuan Fund for Mathematics grant, Project Number 12026606. Z.Z. has received support from  the National Key R\&D Program of China, Project Number 2021YFA1001200, 2020YFA0712000,  and the NSFC grant, Project Number 12031013, 12171013. 

\appendix
\section{The introduction of the Kummer's function}
    Kummer's function of the first kind $M(a,b,z)$ is a solution of the confluent hyper-geometric differential equation 
    \begin{equation*}
        z \frac{d^2w}{dz^2} + (b-z) \frac{dw}{dz} -a w = 0,
    \end{equation*}
    where $a$, $b$ are two constant.
    
    $M(a,b,z)$ has a hyper-geometric series given by
    \begin{equation}\label{kummerfunction_M}
        M(a,b,z) = 1 + \frac{a}{b} z + \frac{a(a+1)}{b(b+1)2!} z^2 + ...
    \end{equation}
    We have 
    \begin{equation}\label{M_0}
        M(a,b,z) \to 1, \quad z \to 0.
    \end{equation}
    When $z \to \infty$, one gets
    \begin{equation}\label{M_infty}
        M(a,b,z) \sim e^z z^{a-b} \frac{\Gamma(b)}{\Gamma(a)}, \quad |\mathbf{ph} z| < \frac{1}{2} \pi,
    \end{equation}
    where $\mathbf{ph} z$ is the principle value of $a$, $-\pi<\mathbf{ph} z<\pi$ and $a \neq 0,-1,-2...$.

\section{The introduction of the parabolic cylinder functions}
Parabolic cylinder functions are solutions of the differential equation
\begin{equation*}
    \frac{d^2y}{dx^2} + (a_0x^2+b_0x+c_0)x = 0,
\end{equation*}
which can be converted into the following form
\begin{equation}\label{PCFstandard}
    \frac{d^2y}{dx^2}-(\frac{1}{4}x^2+a)y=0.
\end{equation}
\eqref{PCFstandard} has two standard solutions \cite{abramowitz1972abramowitz, NISTDLMF} 
\begin{align}\label{standard_U(a,x)}
    U(a,x) &={}  \frac{\sqrt{\pi}}{2^{\frac{a}{2}+\frac{1}{4}}\Gamma(\frac{3}{4}+\frac{a}{2})} e^{-\frac{\epsilon}{4}x^2} M\left(\frac{a}{2}+\frac{1}{4},\frac{1}{2},\frac{1}{2}x^2\right) \notag \\
    &-{} \frac{\sqrt{\pi}}{2^{\frac{a}{2}-\frac{1}{4}}\Gamma(\frac{1}{4}+\frac{a}{2})} x e^{-\frac{\epsilon}{4}x^2} M\left(\frac{a}{2}+\frac{3}{4},\frac{3}{2},\frac{1}{2}x^2\right).
\end{align}
\begin{equation}\label{standard_V(a,x)}
    V(a,x) = \frac{\sin\left[\pi(\frac{1}{4}+\frac{a}{2})\right]Y_1 + \cos\left[\pi(\frac{1}{4}+\frac{a}{2})\right]Y_2}{\Gamma(\frac{1}{2}-a)},
\end{equation}
where 
\begin{align*}
    Y_1 &={} \frac{1}{\sqrt{\pi}}\frac{\Gamma(\frac{1}{4}-\frac{a}{2})}{2^{\frac{a}{2}+\frac{1}{4}}} e^{-\frac{\epsilon}{4}x^2} M\left(\frac{a}{2}+\frac{1}{4},\frac{1}{2},\frac{1}{2}x^2\right), \\
    Y_2 &={} \frac{1}{\sqrt{\pi}}\frac{\Gamma(\frac{3}{4}-\frac{a}{2})}{2^{\frac{a}{2}-\frac{1}{4}}} x e^{-\frac{\epsilon}{4}x^2} M\left(\frac{a}{2}+\frac{3}{4},\frac{3}{2},\frac{1}{2}x^2\right).
\end{align*}
Here, $M$ is the Kummer's function in \eqref{kummerfunction_M}.

We have 
\begin{equation}\label{U_infty}
    U(a,x) \sim e^{-\frac{1}{4}x^2} x^{-a-\frac{1}{2}}\left[1+O(\frac{1}{x^2})\right],\quad x\to\infty,
\end{equation}
\begin{equation}\label{V_infty}
    V(a,x) \sim \sqrt{\frac{2}{\pi}}e^{\frac{1}{4}x^2} x^{a-\frac{1}{2}}\left[1+O(\frac{1}{x^2})\right],\quad x\to\infty.
\end{equation}
and
\begin{equation}\label{U_0}
    \lim_{x \to 0+} U(a,x) = \frac{\sqrt{\pi}}{2^{\frac{a}{2}+\frac{1}{4}}\Gamma(\frac{3}{4}+\frac{a}{2})}.
\end{equation}
For the derivative of $U(a,x)$, one has\cite{NISTDLMF}
\begin{equation}\label{derivate_U}
    U^{'}(a,x) = \frac{1}{2}xU(a,x)-U(a-1,x),
\end{equation}
and
\begin{equation}\label{derivate_U_0}
    \lim_{x \to 0+} U^{'}(a,0+) = -\frac{\sqrt{\pi}}{2^{\frac{a}{2}-\frac{1}{4}}\Gamma(\frac{a}{2}+\frac{1}{4})}.
\end{equation}

\bibliographystyle{siamplain}
\bibliography{ref}

\end{document}